\theoremstyle{plain}
\renewcommand*\l@section[2]{%
\ifnum \c@tocdepth >\z@
\addpenalty\@secpenalty
\addvspace{1.0em \@plus\p@}%
\setlength\@tempdima{1.5em}%
\begingroup
\parindent \z@ \rightskip \z@
\parfillskip\z@
\leavevmode \bfseries
\advance\leftskip\@tempdima
\hskip -\leftskip
#1\hfil\hbox{}\par
\endgroup
\fi}
\renewcommand\@dottedtocline[5]{%
\ifnum #1>\c@tocdepth \else
\vskip \z@ \@plus.2\p@
{\leftskip #2\relax \rightskip \z@ \parfillskip -\rightskip
\parindent #2\relax\@afterindenttrue
\interlinepenalty\@M
\leavevmode
\@tempdima #3\relax
\advance\leftskip \@tempdima \null\nobreak\hskip -\leftskip
{#4}\hfill\hbox{}\par
}%
\fi}
\newcommand*{\infG}{\ensuremath{\infty\text{-}\mathbb{G}\text{r}}}
\newcommand*{\infGr}{\ensuremath{\infty\text{-}\mathbb{G}\text{rr}}}
\newcommand*{\ninfG}{\ensuremath{(\infty,n)\text{-}\mathbb{G}\text{r}}}
\newcommand*{\npinfG}{\ensuremath{(\infty,n+1)\text{-}\mathbb{G}\text{r}}}
\newcommand*{\infC}{\ensuremath{\infty\text{-}\mathbb{C}\text{at}}}
\newcommand*{\ninfC}{\ensuremath{(\infty,n)\text{-}\mathbb{C}\text{at}}}
\newcommand*{\minfC}{\ensuremath{(\infty,m)\text{-}\mathbb{C}\text{at}}}
\newcommand*{\zinfC}{\ensuremath{(\infty,0)\text{-}\mathbb{C}\text{at}}}
\newcommand*{\uinfC}{\ensuremath{(\infty,1)\text{-}\mathbb{C}\text{at}}}
\newcommand*{\npinfC}{\ensuremath{(\infty,n+1)\text{-}\mathbb{C}\text{at}}}
\newcommand*{\ninfet}{\ensuremath{(\infty,n)\text{-}\mathbb{E}\text{t}\mathbb{C}\text{at}}}
\newcommand*{\T}{\ensuremath{\mathbb{T}}}
\newcommand*{\G}{\ensuremath{\mathbb{G}\text{r}}}
\newcommand*{\IG}{\ensuremath{\infty\text{-}\G}}
\title{Algebraic Definition of weak $(\infty,n)$-Categories}
\author{Camell Kachour}
\begin{document}
\maketitle
\vspace*{3.5cm}
\begin{abstract}
In this paper we define a sequence of monads $\T^{(\infty,n)} (n\in\mathbb{N})$ on $\IG$, the category of the $\infty$-graphs. 
 We conjecture that algebras for $\T^{(\infty,0)}$, which are defined in a purely algebraic setting, are models of weak $\infty$-groupoids. 
And for all $n\geqslant 1$ we conjecture that algebras for $\T^{(\infty,n)}$, which are defined in a purely algebraic setting, are 
  models of weak $(\infty,n)$-categories.
\end{abstract}

\begin{minipage}{118mm}{\small
{\bf Keywords.} $(\infty,n)$-Categories, Weak $\infty$-groupoids, Homotopy types.\\
{\bf Mathematics Subject Classification (2010).} 18B40,18C15, 18C20, 18G55, 20L99, 55U35, 55P15.
}\end{minipage}

\hypersetup{%
     linkcolor=blue}%
\tableofcontents 
\vspace*{1cm}

\vspace*{1cm}

\section*{Introduction}

The notion of weak $(\infty,n)$-category can be made precise in many ways depending on our approach to the higher category. Intuitively this is   
a weak $\infty$-category such that all its cells of dimension greater then $n$ are equivalences. Such an intuition is behind the existing approach to 
weak $(\infty,n)$-categories as developed by Joyal (quasicategories; see \cite{JoyalTierney}), Lurie and Simpson (based on Segal's idea; see 
\cite{LurieTopos,simpson-homotopy-higher}), and Rezk ($\Theta_n$-categories; see \cite{RezkCartesian}). It is already known that all theses definitions 
are equivalent in an appropriate sense. However, all these definitions are not of algebraic nature.

In this paper we propose the first purely algebraic definition of $(\infty,n)$-categories in the globular settings, meaning that we describe this kind of object as algebras of some monad with good categorical properties. We conjecture that the models of the $(\infty,n)$-categories that we propose here, are equivalent to other existing models in a precise sense explained below.

Our main motivation for introducing the algebraic model of $(\infty,n)$-categories came from our wish to build a machinery which would lead to a proof of the  
"Grothendieck conjecture on homotopy types" and, possibly, it generalisation. This conjecture of Grothendieck (see \cite{grothendieck83:_pursuin_stack,bat:monglob}),
 claims that weak $\infty$-groupoids encode all homotopical information about their associated topological spaces. In his seminal article (see \cite{bat:monglob}), 
 Michael Batanin gave the first accurate formulation of this conjecture by building a fundamental weak $\infty$-groupoid functor between the category of 
 topological spaces $\mathbb{T}op$ to the category of the weak $\infty$-groupoids in his sense. This conjecture is not solved yet, and a good direction to solve it
  should be to build first a Quillen Model structure on the category of weak $\omega$-groupoids in the sense of Michael Batanin, and then show that his 
  fundamental weak $\infty$-groupoid functor is a right part of a Quillen equivalence. One obstacle for building such a model structure is that the category of 
  Batanin $\infty$-groupoids is defined in a nonalgebraic way. An important property of the category of weak $\infty$-groupoids 
  $\mathbb{A}lg(\mathbb{T}^{(\infty,0)})$ (see section \ref{Definition}) that we propose here is to be locally presentable 
  (see section \ref{weak-omega-n-categories}). Therefore, we hope that this will allow us in the future to use Smith's theory on combinatorial model 
  categories in our settings (see \cite{Smith-Quillen}). 

 More generally, we expect that it might be possible to build a combinatorial model category structure, for each category $\mathbb{A}lg(\mathbb{T}^{(\infty,n)})$ 
of weak $(\infty,n)$-categories (see section \ref{Definition}) for arbitrary $n\in\mathbb{N}$. As an application we should be able to prove the existence of 
Quillen  equivalences between our $(\infty,n)$-categories and other models of $(\infty,n)$-categories. This should be considered as a generalization of 
the Grothendieck conjecture for higher integers $n>0$.

 The aim of our  present paper is to lay a categorical foundation for this multistage project. The model theoretical aspects of this project will be considered in the future papers (but see the remark \ref{homotopy} about possible approaches).    

Our algebraic description of weak $(\infty,n)$-category is an adaptation of the "philosophy" of categorical stretching as developed by Jacques Penon in \cite{penon1999} 
to describe his weak $\infty$-categories. Here we add the key concept of $(\infty,n)$-reversible structure (see section \ref{reversible-omega-graphs}). 

The plan of this article is as follow :\\
In the first section we introduce \textit{reversors}, which are the operations algebraically describing equivalences. These
operations plus the brilliant idea of categorical stretching developed by Jacques Penon (see \cite{penon1999}) are in the heart of our approach to weak 
$(\infty,n)$-categories.\\ 
The second section introduces the reader to strict $(\infty,n)$-categories,
where we point out the important fact that reversors are "canonical" in the "strict world". Reflexivity for strict $(\infty,n)$-categories is
see as specific structure, using operations that we call \textit{reflexors}, and we study in detail the relationships between
 \textit{reversors} and \textit{reflexors} (see \ref{involutive-reflexivity}). However most material of this section is well known.\\
The third section gives the steps to define our algebraic approach to weak $(\infty,n)$-categories. First 
we define $(\infty,n)$-magmas (see \ref{the-omega-magmas}), which are the "$(\infty,n)$-analogue" of the $\infty$-magmas of Penon. 
Then we define $(\infty,n)$-categorical stretching (see \ref{groupoidal_stretchings}), which is the "$(\infty,n)$-analogue" of the categorical stretching of
Penon. In \cite{penon1999}, Jacques Penon used categorical stretching to weakened strict $\infty$-categories. Roughly speaking, the philosophy
of Penon follows the idea that the "weak" must be controlled by the "strict", and it is exactly what the $(\infty,n)$-categorical stretchings do for
the "$(\infty,n)$-world". Thirdly we give the definition of weak $(\infty,n)$-categories (see \ref{Definition}) as algebras for specifics 
monads $\mathbb{T}^{(\infty,n)}$ on $\infG$. We show in \ref{algebre-magma} that each $\mathbb{T}^{(\infty,n)}$-algebra $(G,v)$ puts on 
$G$ a canonical $(\infty,n)$-magma structure. Then, as we do for the strict case, we study the more subtle 
relationship between \textit{reversors} and \textit{reflexors} for weak $(\infty,n)$-categories (see section \ref{interactions-revers-invo-reflex}). 
Finally in \ref{computations}, we make some computations for weak $\infty$-groupoids. We show that weak $\infty$-groupoids in dimension $1$ are groupoids, and for weak 
$\infty$-groupoids in dimension $2$, their $1$-cells are equivalences.

{\bf Acknowledgement.} 
First of all I am grateful to my supervisors, Michael Batanin and Ross Street, and the important fact that without their support and without their
encouragement, this work could not have been done. I am also grateful to Andr{\'e} Joyal for his invitation to Montr{\'e}al, and for our 
many discussions, which helped me a lot to improve my point of view of higher category theory. I am also grateful to Denis Charles 
Cisinski who shared with me his point of view of many aspects of abstract homotopy theory. I am also grateful to Clemens Berger
for many discussions with me, especially about the non trivial problem of "transfert" for Quillen model categories. I am also grateful to Paul-Andr{\'e} M{\`e}llies who gave
me the chance to talk (in December 2011) about the monad for weak $\infty$-groupoids in his seminar in Paris, and also to Ren{\'e} Guitart and 
Francois M{\'e}tayer who proposed I talk in their seminars. I am also grateful to Christian Lair who shared with me his point 
of view on sketch theory when I was living in Paris. I am also grateful to the categoricians and other mathematicians of our team at Macquarie University
for their kindness, and their efforts for our seminar on Category Theory. Especially I want to mention Dominic Verity, Steve Lack,
Richard Garner, Mark Weber, Tom Booker, Frank Valckenborgh, Rod Yager, Ross Moore, and Rishni Ratman. Also I have a thought for Brian Day who, unfortunately, left us too early. 
Finally I am grateful to Jacques Penon who taught me, many years ago, his point of view on weak $\infty$-categories. 

I dedicate this work to Ross Street.

\section{$(\infty,n)$-Reversible $\infty$-graphs}
 \label{reversible-omega-graphs}
 
 Let $\mathbb{G}$ be the globe category defined as following: For each $m\in\mathbb{N}$, objects of $\mathbb{G}$ are formal objects $\bar{m}$. 
 Morphisms of $\mathbb{G}$ are generated by the formal cosource and cotarget $\xymatrix{\bar{n}\ar[r]<+2pt>^{s^{m+1}_{m}}\ar[r]<-2pt>_{t^{m+1}_{m}}&\overline{m+1}}$ 
 such that we have the following relations $s^{m+1}_{m}s^{m}_{m-1}=s^{m+1}_{m}t^{m}_{m-1}$ and $t^{m+1}_{m}t^{m}_{m-1}=t^{m+1}_{m}s^{m}_{m-1}$. 
 An $\infty$-\textit{graph} $X$ is just a presheaf $\xymatrix{\mathbb{G}^{op}\ar[rr]^{X}&&\mathbb{S}et}$. We denote by $\IG:=[\mathbb{G}^{op},\mathbb{S}et]$ 
 the category of $\infty$-graphs where morphisms are just natural transformations. If $X$ is an $\infty$-graph, sources and targets are still denoted 
 $s^{m+1}_{m}$ and $t^{m+1}_{m}$. If $0\leqslant p<m$ we denote $s^{m}_{p}:=s^{p+1}_{p}\circ ...\circ s^{m}_{m-1}$ and 
 $t^{m}_{p}:=t^{p+1}_{p}\circ ...\circ t^{m}_{m-1}$.    
 
 An $(\infty,n)$-\textit{reversible graph}, or $(\infty,n)$-\textit{graph} for short, is given by a couple $(X, (j^{m}_{p})_{0\leqslant n\leqslant p<m})$ where $X$ is an 
$\infty$-graph (see \cite{penon1999}) or "globular set" (see \cite{bat:monglob}), and $j^{m}_{p}$ are maps ($0\leqslant n\leqslant p<m$), called the \textit{reversors}
     \[\xymatrix{X_{m}\ar[rr]^{j^{m}_{p}}&&X_{m}},\]
such that for all integers $n$, $m$, and $p$ such that $0\leqslant n\leqslant p<m$ we have the following diagrams in $\mathbb{S}et$ which commute serially.

          \[\xymatrix{X_{m}\ar[rr]^{j^{m}_{p}}\ar[d]<+1pt>_{s^{m}_{m-1}}\ar[d]<+5pt>^{t^{m}_{m-1}}&&X_{m}\ar[d]<+3pt>^{s^{m}_{m-1}}\ar[d]<+6pt>_{t^{m}_{m-1}}\\
                                X_{m-1}\ar[rr]^{j^{m-1}_{p}}\ar[d]<+1pt>_{s^{m-1}_{m-2}}\ar[d]<+5pt>^{t^{m}_{m-1}}&&X_{m-1}\ar[d]<+3pt>^{s^{m-1}_{m-2}}\ar[d]<+6pt>_{t^{m}_{m-1}}\\
                                X_{m-2}\ar[rr]^{j^{m-2}_{p}}\ar@{.>}[d]<+1pt>^{}\ar@{.>}[d]<+5pt>^{}&&X_{m-2}\ar@{.>}[d]<+3pt>^{}\ar@{.>}[d]<+6pt>_{}\\
                                X_{p+2}\ar[rr]^{j^{p+2}_{p}}\ar[d]<+1pt>_{s^{p+2}_{p+1}}\ar[d]<+5pt>^{t^{m}_{m-1}}&&X_{p+2}\ar[d]<+3pt>^{s^{p+2}_{p+1}}\ar[d]<+6pt>_{t^{m}_{m-1}}\\
                                X_{p+1}\ar[rr]^{j^{p+1}_{p}}\ar[dr]_{t^{p+1}_{p}}&&X_{p+1}\ar[dl]^{s^{p+1}_{p}}\\
                                &X_{p} } \qquad                                
                                \xymatrix{X_{m}\ar[rr]^{j^{m}_{p}}\ar[d]<+1pt>_{s^{m}_{m-1}}\ar[d]<+5pt>^{t^{m}_{m-1}}&&X_{m}\ar[d]<+3pt>^{s^{m}_{m-1}}\ar[d]<+6pt>_{t^{m}_{m-1}}\\
                                X_{m-1}\ar[rr]^{j^{m-1}_{p}}\ar[d]<+1pt>_{s^{m-1}_{m-2}}\ar[d]<+5pt>^{t^{m}_{m-1}}&&X_{m-1}\ar[d]<+3pt>^{s^{m-1}_{m-2}}\ar[d]<+6pt>_{t^{m}_{m-1}}\\
                                X_{m-2}\ar[rr]^{j^{m-2}_{p}}\ar@{.>}[d]<+1pt>^{}\ar@{.>}[d]<+5pt>^{}&&X_{m-2}\ar@{.>}[d]<+3pt>^{}\ar@{.>}[d]<+6pt>_{}\\
                                X_{p+2}\ar[rr]^{j^{p+2}_{p}}\ar[d]<+1pt>_{s^{p+2}_{p+1}}\ar[d]<+5pt>^{t^{m}_{m-1}}&&X_{p+2}\ar[d]<+3pt>^{s^{p+2}_{p+1}}\ar[d]<+6pt>_{t^{m}_{m-1}}\\
                                X_{p+1}\ar[rr]^{j^{p+1}_{p}}\ar[dr]_{s^{p+1}_{p}}&&X_{p+1}\ar[dl]^{t^{p+1}_{p}}\\
                                &X_{p} } \]

 A morphism of $(\infty,n)$-graphs
 
        \[\xymatrix{(X, (j^{m}_{p})_{0\leqslant n\leqslant p<m})\ar[rr]^{\varphi}&&(X', (j'^{m}_{p})_{0\leqslant n\leqslant p<m}) }\]
is given by a morphism of $\infty$-graphs $\xymatrix{X\ar[r]^{\varphi}&X'}$ which is compatible with the 
  reversors, which means that for integers $0\leqslant n\leqslant p<m$ we have the following commutative squares
  
       \[\xymatrix{X_{m}\ar[d]_{j^{m}_{p}}\ar[rr]^{\varphi_{m}}&&X'_{m}\ar[d]^{j'^{m}_{p}}\\
          X_{m}\ar[rr]_{\varphi_{m}}&&X'_{m} }\]        
The category of $(\infty,n)$-graphs is denoted $(\infty,n)$-$\G$.

\begin{remark}
In \cite{kamelkachour:defalg} we defined the category $(n,\infty)$-$\G$ of "$\infty$-graphes $n$-cellulaires" ($n$-cellular $\infty$-graphs) which is a completely different category from this category $(\infty,n)$-$\G$. The category $(n,\infty)$-$\G$ was used to define an algebraic approach to "weak $n$-higher transformations", still in the same spirit of 
the weak $\infty$-categories of Penon (\cite{penon1999}).
\end{remark}  

\begin{remark}
 Throughout this paper the reversors are denoted by the symbols "$j^{m}_{p}$" except with weak $(\infty,n)$-categories 
 (see comment in section \ref{algebre-magma}) where they are denoted  
 by the symbols "$i^{m}_{p}$". Let us also make a little comment on reflexive $\infty$-graphs (see \cite{penon1999} for their definition). For us a reflexive $\infty$-graph $(X,(1^{p}_{m})_{0\leqslant p<m})$ must be seen as a "structured $\infty$-graph", that is, an $\infty$-graph $X$ equipped with a structure $(1^{p}_{m})_{0\leqslant p<m}$, where the maps 
 $\xymatrix{X(p)\ar[r]^{1^{p}_{m}}&X(m)}$ must be considered as specific operations that we call \textit{reflexors}. 
 Throughout this paper these operations are denoted by the symbols $1^{p}_{m}$ except for 
 the underlying reflexive structure of weak $(\infty,n)$-categories (see section \ref{algebre-magma}) where, instead, they are denoted by the symbols $\iota^{p}_{m}$ (with the Greek letter "iota"). Morphisms between reflexive $\infty$-graphs are morphisms of $\infty$-graphs which respect this structure. In \cite{penon1999} the category of reflexive $\infty$-graphs is denoted $\infGr$. The canonical forgetful functor $\xymatrix{\infGr\ar[r]^{\mathbb{U}}&\infG}$ is a right adjoint, and gives rise to the very important monad $\mathbb{R}$ of reflexive $\infty$-graphs on $\infty$-graphs.
\end{remark}

The reversors are built without using limits, and it is trivial to build the sketch\footnote{see \cite{laircoppey:esquisses,francisborceux:handbook2} for good references on sketch 
theory.} $\mathcal{G}_{n}$ of the $(n,\infty)$-graphs, which has no cones and no cocones, thus $(n,\infty)$-$\G$ is just a category of presheaves, $(n,\infty)$-$\G$ $\simeq$ $[\mathcal{G}_{n};\mathbb{S}et]$. Denote by $\mathcal{G}$ the sketch of $\infty$-graphs. We have the inclusions 
\[\xymatrix{&\mathcal{G}\ar@{^{(}->}[ld]\ar@{^{(}->}[rd]\\
\mathcal{G}_{n+1}\ar@{^{(}->}[rr]&&\mathcal{G}_{n}}\]
which show that the forgetful functor 
\[\xymatrix{\ninfG\ar[r]^(.46){M_{n}}&\npinfG}\] 
which forgets the reversors 
$(j^{m}_{n})_{m\geqslant n+2}$ for each $(\infty,n)$-graphs, has a left and a right adjoint, $L_{n}\dashv M_{n}\dashv R_{n}$. The functor $L_{n}$ is 
 the "free $(\infty,n)$-graphisation functor" on $(\infty,n+1)$-graphs, and the functor $R_{n}$ is the 
"internal $(\infty,n)$-graphisation functor" on $(\infty,n+1)$-graphs. The forgetful functor 
\[\xymatrix{\ninfG\ar[r]^(.54){O_{n}}&\IG}\]
which forgets all the reversors, has a left and a right adjoint, $G_{n}\dashv O_{n}\dashv D_{n}$. The functor $G_{n}$ is the 
"free $(\infty,n)$-graphisation functor" on $\infty$-graphs, and the functor $D_{n}$ is the "internal $(\infty,n)$-graphisation functor" 
on $\infty$-graphs. It is easy to see that $M_{n}$ and $O_{n}$ are monadic because they are conservative and both have 
rights adjoints (so they preserve all coequalizers). 

\section{Strict $(\infty,n)$-categories ($n\in\mathbb{N}$)}
\label{strict-omega-n-categories}

\subsection{Definition}
\label{def-cat}

The definition of the category $\infty$-$\mathbb{C}at$ of $\infty$-categories can be found in \cite{penon1999}.
Let $C$ be a strict $\infty$-category, and for all $0\leqslant p<m$ denote $\circ^{m}_{p}$ its operations.
 These operations are maps 
       \[\xymatrix{\circ^{m}_{p}: C(m)\underset{C(p)}\times C(m)\ar[r]&C(m)}\] 
such that 
 $C(m)\underset{C(p)}\times C(m)=\{(y,x)\in C(m)\times C(m): s^{m}_{p}(y)=t^{m}_{p}(x)\}$.
 
 Let us remind that
 domain and codomain of these operations must satisfy the following conditions: If $(y,x)\in C(m)\underset{C(p)}\times C(m)$, then
 \begin{itemize}
 \item for $0\leqslant p<q<m$ we have
 $s^{m}_{q}(y\circ^{m}_{p}x)=s^{m}_{q}(y)\circ^{q}_{p}s^{m}_{q}(x)$ and $t^{m}_{q}(y\circ^{m}_{p}x)=t^{m}_{q}(y)\circ^{q}_{p}t^{m}_{q}(x)$\\
 \item for $0\leqslant p\leqslant q<m$ we have $s^{m}_{q}(y\circ^{m}_{p}x)=s^{m}_{q}(x)$ and $t^{m}_{q}(y\circ^{m}_{p}x)=t^{m}_{q}(x)$. 
 \end{itemize}
It is the \textit{positional axioms}, following terminology in \cite{penon1999}. 

 If we denote $(C, (1^{p}_{m})_{0\leqslant p<m})$ the underlying reflexivity structure on $C$, then operations $1^{p}_{m}$
  are just an abbreviation for $1^{m-1}_{m}\circ ...\circ 1^{p}_{p+1}$.
 These reflexivity maps $1^{p}_{m}$ are called \textit{reflexors} to point out that we see the reflexivity as a specific structure.

Now consider $\alpha \in C(m)$ be an $m$-cell of $C$. We say that 
$\alpha$ has an $\circ^{m}_{p}$-inverse $(0\leqslant p<m)$ if there is an $m$-cell $\beta \in C(m)$ such that
$\alpha\circ^{m}_{p}\beta=1^{p}_{m}(t^{m}_{p}(\alpha))$ and 
$\beta\circ^{m}_{p}\alpha=1^{p}_{m}(s^{m}_{p}(\alpha))$.

A strict $(\infty,n)$-category $C$ is a strict $\infty$-category such that for all $0\leqslant n\leqslant p<m$, every $m$-cell $\alpha \in C(m)$ has
an $\circ^{m}_{p}$-inverse. If such an inverse exists then it is unique, because it is an inverse for a morphism in a category. 
Thus every strict $(\infty,n)$-category $C$ has an underlying canonical $(\infty,n)$-reversible $\omega$-graph
 $(C, (j^{m}_{p})_{0\leqslant n\leqslant p<m})$ such that the maps $j^{m}_{p}$ give the unique $\circ^{m}_{p}$-inverse for 
 each $m$-cell of $C$. In other words, for each $m$-cell 
 $\alpha$ of $C$ such that $0\leqslant n\leqslant p<m$, we have $\alpha\circ^{m}_{p}j^{m}_{p}(\alpha)=1^{p}_{m}(t^{m}_{p}(\alpha))$ and 
$j^{m}_{p}(\alpha)\circ^{m}_{p}\alpha=1^{p}_{m}(s^{m}_{p}(\alpha))$. Strict $\infty$-functors respect the reversibility. As a matter of fact, consider two strict $(\infty,n)$-categories $C$ and $C'$ and a strict $\infty$-functor $\xymatrix{C\ar[r]^{F}&C'}$. If
 $\alpha$ is an $m$-cell of $C$, then for all $0\leqslant n\leqslant p<m$, we have 
 $F(j^{m}_{p}(\alpha)\circ^{m}_{p}\alpha)=F(j^{m}_{p}(\alpha))\circ^{m}_{p}F(\alpha)= 
  F(1^{p}_{m}(s^{m}_{p}(\alpha)))=1^{p}_{m}(F(s^{m}_{p}(\alpha)))
 =1^{p}_{m}(s^{m}_{p}(F(\alpha)))=j^{m}_{p}(F(\alpha))\circ^{m}_{p}F(\alpha)$ which shows, by the unicity of
 $j^{m}_{p}(F(\alpha))$, that $F(j^{m}_{p}(\alpha))=j^{m}_{p}(F(\alpha))$. Thus morphisms between strict $(\infty,n)$-categories are just 
strict $\infty$-functors. The category of strict $(\infty,n)$-categories, denoted $(\infty,n)$-$\mathbb{C}at$, is a full subcategory 
of $\infty$-$\mathbb{C}at$ because strict $\infty$-functors preserve reversibility.

It is not difficult to see that there is a projective sketch $\mathcal{C}_{n}$ such that there is an equivalence of categories 
$\mathbb{M}od(\mathcal{C}_{n})\simeq \ninfC$. Thus, for all $n\in\mathbb{N}$, 
the category $(\infty,n)$-$\mathbb{C}at$ is locally presentable. 

 Furthermore, for each $n\in\mathbb{N}$, we have the following forgetful functor
 
   \[\xymatrix{\ninfC\ar[r]^(.6){U_{n}}&\infG}.\]
   
  There is an easy inclusion $\mathcal{G}\subset \mathcal{C}_{n}$,
  and this inclusion of sketches produces on passing to models, a functor $C_{n}$ between 
 the category of models $\mathbb{M}od(\mathcal{C}_{n})$ and the category of models $\mathbb{M}od(\mathcal{G})$
 
  \[\xymatrix{\mathbb{M}od(\mathcal{C}_{n})\ar[r]^{C_{n}}&\mathbb{M}od(\mathcal{G})},\] 
and the associated sheaf theorem for sketches of Foltz (see section \cite{foltz}) 
proves that $C_{n}$ has a left adjoint. 
  Thus the following commutative square induced by the previous equivalence of categories
  
  \[\xymatrix{\mathbb{M}od(\mathcal{C}_{n})\ar[r]^{C_{n}}\ar[d]_{\wr}&\mathbb{M}od(\mathcal{G})\ar[d]^{\wr}\\
   \ninfC\ar[r]^(.55){U_{n}}&\infG}\]
produces the required left adjoint
$\xymatrix{F_{n}\dashv U_{n}: \ninfC\ar[r]&
    \infG}$.
      
The unit and the counit of this adjunction are respectively denoted by $\lambda^{(\infty,n)}_{s}$ and $\varepsilon^{(\infty,n)}_{s}$.  
 It is not difficult to show, by using Beck's theorem of monadicity (see for instance \cite{francisborceux:handbook2}) that these functors 
 $U_{n}$ are monadic. This adjunction produces a monad $\mathbb{T}^{(\infty,n)}_{s}=(T^{(\infty,n)}_{s},\mu^{(\infty,n)}_{s},\lambda^{(\infty,n)}_{s})$ on $\infty$-$\G$, which
 is the monad for strict $(\infty,n)$-categories on $\infty$-graphs.  
 \begin{remark}
For each $n\in\mathbb{N}$, when no confusion appears, we will simplify the notation of these monads so that
 $\mathbb{T}^{n}_{s}=(T^{n}_{s},\mu^{n}_{s},\lambda^{n}_{s})$ means $\mathbb{T}^{(\infty,n)}_{s}=(T^{(\infty,n)}_{s},\mu^{(\infty,n)}_{s},\lambda^{(\infty,n)}_{s})$
 on omitting the symbol $\infty$. 
 \end{remark}
\begin{remark}
 We can also define the category $(\infty,\bullet)$-$\mathbb{C}at$ which contains as objects all $(\infty,n)$-categories (where now $n$ can vary)
and as morphisms those of $(\infty,n)$-categories (for all $n\in\mathbb{N}$), and so it is evident that such a category is locally presentable. If $C$ is an object of 
$(\infty,\bullet)$-$\mathbb{C}at$, let us call the \textit{index} of $C$ the integer 
    \[Ind(C)=min\{m\in\mathbb{N}\ \text{ such that } C \text{ is also an object of } \minfC \}.\]
Note that if $\xymatrix{C\ar[r]^{F}&C'}$ is a morphism of $(\infty,n)$-$\mathbb{C}at$ then necessarily: 
\[Ind(C),Ind(C')\leqslant n.\]
 A morphism $\xymatrix{C\ar[r]^{F}&C'}$ in $(\infty,\bullet)$-$\mathbb{C}at$ is given by a strict $\infty$-functor $F$ such that $Ind(C)>Ind(C')$.
This evident characterisation of morphisms in $(\infty,\bullet)$-$\mathbb{C}at$ shows clearly that $(\infty,\bullet)$-$\mathbb{C}at$ is not a full subcategory of 
$\infty$-$\mathbb{C}at$. Let us define the index of $\xymatrix{C\ar[r]^{F}&C'}$ in $(\infty,\bullet)$-$\mathbb{C}at$ as $Ind(F)=Ind(C)$. It is
clear that such a strict $\infty$-functor is a morphism for all categories $(\infty,Ind(F)+r)$-$\mathbb{C}at$ where $r\in\mathbb{N}$. Consider the filtration
\[\xymatrix{\zinfC\ar[r]^{V_{0}}&\uinfC\ar@{.>}[r]^{}&\ar[r]^(.4){V_{n}}\ninfC
 &\npinfC\ar@{.>}[r]&...}\]
such that $(\infty,0)$-$\mathbb{C}at$ is the category of the strict $\infty$-groupoids, $(\infty,1)$-$\mathbb{C}at$ is the category of the strict quasicategories, etc.
  The functors $V_{n}$ ($n\in\mathbb{N}$) involved in this filtration are just inclusions of categories. Thus the filtered colimit of this filtration 
  is just $(\infty,\bullet)$-$\mathbb{C}at$ which
 gives us a more conceptual description.
\end{remark}

 As we did for $(\infty,n)$-graphs (see section \ref{reversible-omega-graphs}) by building functors of "$(\infty,n)$-graphisation",  
  we are going to build some functors of "strict $(\infty,n)$-categorification" by using systematically the Dubuc adjoint triangle theorem 
 (see \cite{dubuc:kanextensionenriched}). 
 
For all $n\in\mathbb{N}$ we have the following triangle in $\mathbb{C}AT$
 
  \[\xymatrix{\ninfC\ar[rrr]^(.45){V_{n}}\ar[rd]_{U_{n}}&&&\ar[dll]^{U_{n+1}}\npinfC\\
 &\infG}\]
where the functor $V_{n}$ forgets the reversors $(j^{m}_{n})_{m\geqslant n+2}$ for each strict $(\infty,n)$-category, and we have the 
 adjunctions $F_{n}\dashv{U_{n}}$ and  $F_{n+1}\dashv{U_{n+1}}$, where in particular
 $U_{n+1}V_{n}=U_{n}$ and $U_{n+1}$ is monadic. So we can apply the Dubuc adjoint triangle theorem 
 which shows that the functor $V_{n}$ has a left adjoint: $L_{n}\dashv{V_{n}}$. 
 For each strict $(\infty,n+1)$-category $C$, the left adjoint $L_{n}$ of $V_{n}$ assigns the free strict $(\infty,n)$-category $L_{n}(C)$ associated to $C$. 
 The functor $L_{n}$ is the "free strict $(\infty,n)$-categorification functor" for strict $(\infty,n+1)$-categories. 
 Notice that the functor $V_{n}$ has an evident right adjoint $R_{n}$. For each strict $(\infty,n+1)$-category $C$, the right adjoint $R_{n}$ of $V_{n}$ 
 assigns the maximal strict $(\infty,n)$-category $R_{n}(C)$ associated to $C$. The proof is trivial because if $D$ is an object of 
 $(\infty,n)$-$\mathbb{C}at$, then the unit map $\xymatrix{D\ar[r]^(.3){\eta_n}&R_{n}(V_{n}(D))}$ is just the identity $1_{D}$, and its 
 universality becomes straightforward.
 
We can apply the same argument to following triangle in $\mathbb{C}AT$ (where here the functor $V$  forgets all the reversors or can be seen 
 as an inclusion) 
 
  \[\xymatrix{\ninfC\ar[rrr]^{V}\ar[rd]_{U_{n}}&&&\ar[dll]^{U}\infC\\
 &\infty-\G}\] 
to prove that the functor $V$ has a left adjoint: $L\dashv{V}$. For each strict $\infty$-category $C$, the left adjoint $L$ of $V$ assigns the free strict 
 $(\infty,n)$-category $L_{n}(C)$ associated to $C$. The functor $L$ is the "free strict $(\infty,n)$-categorification functor" for strict $\infty$-categories.
 Notice also that the functor $V$ has an evident right adjoint $R$, with a trivial argument as before, for the adjunction $V_{n}\dashv R_{n}$.

%
%
%
%
%
 
  \begin{remark}     
  The previous functors $V_{n}$, $V$ are from our point 
  of view not only inclusions but are also "trivial forgetful functors". 
Indeed for instance, they occur in the paper \cite{metayerara:browngolasinski} where they don't see
 strict $\infty$-groupoids (which are in our terminology $(\infty,0)$-categories) as strict $\infty$-categories equipped with 
 canonical reversible structures. So from their point of view $V$ is just an inclusion. We don't claim that their point of view is incorrect
 but we believe that our point of view, which is more algebraic (the reversors $j^{m}_{p}$ must be
 seen as unary operations), gives more clarity by showing that this inclusion is also 
 a forgetful functor which forgets the canonical and unique reversible structures of some specific strict $\infty$-categories. 
 Basically in our point of view, a strict $(\infty,n)$-category ($n\in\mathbb{N}$) is a strict
 $\infty$-category equipped with some canonical specific structures.
\end{remark}

 \subsection{$(\infty,n)$-Involutive structures and $(\infty,n)$-reflexivity structures.}
 \label{involutive-reflexivity}
  
 The involutive properties and the reflexive structures (see below) are important properties or structures that are part of
 each strict $(\infty,n)$-categories ($n\in\mathbb{N}$). We could have spoken about strict $(\infty,n)$-categories without referring to these
 two specific structures, but we believe that it is informative to especially point out that these two structures are canonical in the world of  
  strict $(\infty,n)$-categories but are not canonical in the world of weak $(\infty,n)$-categories (see section \ref{interactions-revers-invo-reflex}). 
   In particular we will show that they cannot be weakened for
    weak $(\infty,n)$-categories, but only for some specific equalities which are part of these two kinds of structures 
    (see section \ref{weak-omega-n-categories}). This part can also be seen as the observation that some other properties or structures which are true  in the world of
     strict $(\infty,n)$-categories might or not be weakened in the world of weak $(\infty,n)$-categories. 
   
 An involutive $(\infty,n)$-graph is given by an $(\infty,n)$-graph $(X, (j^{m}_{p})_{0\leqslant n\leqslant p<m})$ such that $j^{m}_{p}\circ  
 j^{m}_{p}=1_{X_{m}}$. The involutive $(\infty,n)$-graphs form a full reflexive subcategory $i(\infty,n)$-$\G$ of $(\infty,n)$-$\G$. For each $n\in\mathbb{N}$ and for each
 strict $(\infty,n)$-category $C$, its underlying $(\infty,n)$-graph $(C, (j^{m}_{p})_{0\leqslant n\leqslant p<m})$ is an  
 involutive $(\infty,n)$-graph. As a matter of fact, for each $0\leqslant n\leqslant p<m$ and for each $m$-cell $\alpha\in C(m)$, we have
 $j^{m}_{p}(j^{m}_{p}(\alpha))\circ^{m}_{p}j^{m}_{p}(\alpha)=1^{p}_{m}(s^{m}_{p}(j^{m}_{p}(\alpha)))=1^{p}_{m}(t^{m}_{p}(\alpha))$, 
 thus $j^{m}_{p}(j^{m}_{p}(\alpha))$ is the unique $\circ^{m}_{p}$-inverse of $j^{m}_{p}(\alpha)$ which is $\alpha$. Thus 
 $j^{m}_{p}(j^{m}_{p}(\alpha))=\alpha$. 
     
 A reflexive $(\infty,n)$-graph is given by a triple $(X, (1^{p}_{m})_{0\leqslant p<m}, (j^{m}_{p})_{0\leqslant n\leqslant p<m})$ 
 where $(X, (1^{p}_{m})_{0\leqslant p<m})$ is an $\infty$-graph equipped with a reflexivity structure $(1^{p}_{m})_{0\leqslant p<m}$, 
 $(X, (j^{m}_{p})_{0\leqslant n\leqslant p<m})$ is an
 $(\infty,n)$-graph, and such that we have the following commutative diagram in $\mathbb{S}et$, which expresses
 the relations between the truncation at level $n$ of the reflexors $1^{p}_{m}$ and the reversors
 $j^{m}_{p}\hspace{.1cm}(0\leqslant n\leqslant p<m)$.  
 
 \[\xymatrix{X_{n}\ar[rr]^{j^{n}_{p}}&&X_{n}\\
     X_{n-1}\ar[u]^{1^{n-1}_{n}}\ar[rr]^{j^{n-1}_{p}}&&X_{n-1}\ar[u]_{1^{n-1}_{n}}\\
     X_{n-2}\ar[u]^{1^{n-2}_{n-1}}\ar[rr]^{j^{n-2}_{p}}&&X_{n-2}\ar[u]_{1^{n-2}_{n-1}}\\
     X_{p+2}\ar@{.>}[u]^{}\ar[rr]^{j^{p+2}_{p}}&&X_{p+2}\ar@{.>}[u]^{}\\
                                X_{p+1}\ar[u]^{1^{p+1}_{p+2}}\ar[rr]^{j^{p+1}_{p}}&&X_{p+1}\ar[u]_{1^{p+1}_{p+2}}\\
                                &X_{p}\ar[lu]^{1^{p}_{p+1}}\ar[ru]_{1^{p}_{p+1}} } \]
Thus for all integers $p, q, m$ such that $0\leqslant n\leqslant q < m$ and $q\geqslant p\geqslant 0$ we have
  $j^{m}_{q}(1^{p}_{m}(\alpha))=1^{p}_{m}(\alpha)$ and for all integers $p, q, m$ such that $0\leqslant n\leqslant q < p < m$ we
  have $j^{m}_{q}(1^{p}_{m}(\alpha))=1^{p}_{m}(j^{p}_{q}(\alpha))$. Morphisms between
  reflexive $(\infty,n)$-graphs are those which are morphisms of reflexive $\infty$-graphs and
 morphisms of $(\infty,n)$-graphs. The category of reflexive $(\infty,n)$-graphs is denoted by  
  $(\infty,n)$-$\mathbb{G}\text{rr}$. 
  
  For each $n\in\mathbb{N}$ and for each strict $(\infty,n)$-category $C$, its underlying $(\infty,n)$-graph $(C$ $, (j^{m}_{p})_{0\leqslant n\leqslant p<m})$ has an underlying   
 reflexive $(\infty,n)$-graph $(C, (1^{p}_{m})_{0\leqslant p<m}, (j^{m}_{p})_{0\leqslant n\leqslant p<m})$ where $(1^{p}_{m})_{0\leqslant p<m}$
 is the reflexive structure of the underlying strict $\infty$-category of $C$. As a matter of fact for all integers $p, q, m$ such that 
 $0\leqslant n\leqslant q < p < m$ we have  
 $j^{m}_{q}(1^{p}_{m}(\alpha))\circ^{m}_{q}1^{p}_{m}(\alpha)=1^{q}_{m}(s^{m}_{q}(1^{p}_{m}(\alpha)))=1^{q}_{m}(s^{p}_{q}(\alpha))$.
 Also we have the following axiom for strict $\infty$-categories: If $q<p<m$ and $s^{p}_{q}(y)=t^{p}_{q}(x)$ then
 $1^{p}_{m}(y\circ^{p}_{q}x)=1^{p}_{m}(y)\circ^{m}_{q}1^{p}_{m}(x)$ (see \cite{penon1999}). But here we have  
 $s^{p}_{q}(j^{p}_{q}(\alpha))=t^{q+1}_{q}s^{q+2}_{q+1}...s^{p}_{p-1}(\alpha)=t^{p}_{q}(\alpha)$, thus we can apply this axiom:
  $1^{p}_{m}(j^{p}_{q}(\alpha))\circ^{m}_{q}1^{p}_{m}(\alpha)=1^{p}_{m}(j^{p}_{q}(\alpha)\circ^{p}_{q}\alpha)
  =1^{p}_{m}(1^{q}_{p}(s^{p}_{q}(\alpha)))=1^{q}_{m}(s^{p}_{q}(\alpha))$ which 
  shows that $1^{p}_{m}(j^{p}_{q}(\alpha))$ is the unique $\circ^{m}_{q}$-inverse of $1^{p}_{m}(\alpha)$ and thus
  $1^{p}_{m}(j^{p}_{q}(\alpha))=j^{m}_{q}(1^{p}_{m}(\alpha))$. Also for all integers $p, q, m$ such that $0\leqslant n\leqslant q < m$ and $q\geqslant p\geqslant 0$ we have
  $j^{m}_{q}(1^{p}_{m}(\alpha))\circ^{m}_{q}1^{p}_{m}(\alpha)=1^{q}_{m}(s^{m}_{q}(1^{p}_{m}(\alpha)))=1^{q}_{m}(1^{p}_{q}(\alpha))=1^{p}_{m}(\alpha)$
  and $1^{p}_{m}(\alpha)\circ^{m}_{q}1^{p}_{m}(\alpha)=1^{p}_{m}(\alpha)$ because $q\geqslant p$, thus $1^{p}_{m}(\alpha)$ is
  the unique $\circ^{m}_{q}$-inverse of $1^{p}_{m}(\alpha)$ and thus $1^{p}_{m}(\alpha)=j^{m}_{q}(1^{p}_{m}(\alpha))$.

  As in section \ref{strict-omega-n-categories}, it is not difficult to show some similar results for the category $i(\infty,n)$-$\G$
   and the category $(\infty,n)$-$\mathbb{G}\text{rr}$ ($n\in\mathbb{N}$):
    
\begin{itemize}
  \item For each $n\in\mathbb{N}$, the categories $i(\infty,n)$-$\G$ and $(\infty,n)$-$\mathbb{G}\text{rr}$ are both locally presentable.
  \item For each $n\in\mathbb{N}$, there is a monad $\mathbb{I}^{(\infty,n)}_{i}=(I^{(\infty,n)}_{i},\mu^{(\infty,n)}_{i},\lambda^{(\infty,n)}_{i})$ 
  on $\infty$-$\G$ ($i$ here means "involutive") such that $\mathbb{A}lg(\mathbb{I}^{(\infty,n)}_{i})\simeq i(\infty,n)$-$\G$, and a
  monad $\mathbb{R}^{(\infty,n)}_{r}=(R^{(\infty,n)}_{r},\mu^{(\infty,n)}_{r},\lambda^{(\infty,n)}_{r})$ 
  on $\infty$-$\G$ ($r$ here means "reflexive") such that $\mathbb{A}lg(\mathbb{R}^{(\infty,n)}_{r})\simeq (\infty,n)$-$\mathbb{G}\text{rr}$.  
 \item We can also consider the category $i(\infty,n)$-$\mathbb{G}\text{rr}$ of involutive $(\infty,n)$-graphs equipped with a specific 
 reflexivity structure, whose morphisms are those of $(\infty,n)$-$\G$ which respect the reflexivity structures. This category $i(\infty,n)$-$\mathbb{G}\text{rr}$
  is also locally presentable. For each $n\in\mathbb{N}$, there is a monad $\mathbb{K}^{(\infty,n)}_{ir}=(K^{(\infty,n)}_{ir},\mu^{(\infty,n)}_{ir},\lambda^{(\infty,n)}_{ir})$ 
  on $\infty$-$\G$ ($ir$ means here "involutive-reflexive") such that $\mathbb{A}lg(\mathbb{K}^{(\infty,n)}_{ir})\simeq i(\infty,n)$-$\mathbb{G}\text{rr}$. 
  Also there is a forgetful functor from the category $i(\infty,n)$-$\mathbb{G}\text{rr}$ to the category $(\infty,n)$-$\mathbb{G}\text{rr}$ which has a left adjoint, 
  the functor "$(\infty,n)$-involution" of any reflexive $(\infty,n)$-graph, and there is a forgetful functor from the category $i(\infty,n)$-$\mathbb{G}\text{rr}$ 
  to the category $i(\infty,n)$-$\G$ which has a left adjoint, the functor "$(\infty,n)$-reflexivisation" of any
  involutive $(\infty,n)$-graph. These left adjoints are built by using the Dubuc adjoint triangle theorem. 
\end{itemize}

 \section{Weak $(\infty,n)$-categories ($n\in\mathbb{N}$)}
 \label{weak-omega-n-categories}
 
 In this chapter we are going to define our algebraic point of view of weak $(\infty,n)$-categories for all $n\in\mathbb{N}$.
 As the reader will see, many kind of filtrations as in section \ref{strict-omega-n-categories} could be studied here, because
 their filtered colimits do exist. But we have avoided that,
 because all the filtrations involved here are not built with "inclusion functors" but are all right adjunctions, and the author
 has not found a good description of their corresponding filtered colimits. We do hope to afford it in a future work because
 we believe that these filtered colimits have their own interest in abstract homotopy theory, and also in higher category theory. 
  
\subsection{$(\infty,n)$-Magmas}
\label{the-omega-magmas}  

The definition of $\infty$-magmas and morphisms between $\infty$-magmas can be found in 
\cite{kamelkachour:defalg, penon1999}. Roughly speaking an $\infty$-magma in the sense 
of Penon is a reflexive $\infty$-graph equipped with composition $\circ^{m}_{p}$ which satisfy
only \textit{positional axioms} as in \ref{def-cat}. Let us call
 $\infty$-$\mathbb{M}ag$ the category of $\infty$-magmas.
  An $(\infty,n)$-magma is an $\infty$-magma such that its underlying reflexive $\infty$-graph is equipped 
 with a specific $(\infty,n)$-reversible structure in the sense of \ref{reversible-omega-graphs}. However
 an $(\infty,n)$-magma might have several $(\infty,n)$-reversible structures.
  
\begin{remark}
The reversibility part of an $(\infty,n)$-magma has no relation with its reflexivity structure, neither with the involutive properties, contrary to the strict $(\infty,n)$-categories where
their reversible structures, their involutive structures and their reflexivity structures are all related together (see \ref{involutive-reflexivity}). Instead we are going to see
in this section \ref{weak-omega-n-categories}, that each underlying $(\infty,n)$-categorical stretching of any weak $(\infty,n)$-category ($n\in\mathbb{N}$) 
is especially going to be weakened, for the specific relation between the reversibility structure and the involutive structure, inside its underlying reflexive 
$(\infty,n)$-reversible $\infty$-magma the equalities $j^{n+1}_{n}\circ j^{n+1}_{n}=1_{M_{n+1}}$. Also we are going to see
in \ref{groupoidal_stretchings}, that each $(\infty,n)$-categorical stretching is especially going to be weakened, for the specific relation between the 
reversibility structure and the reflexibility structure, inside its underlying reflexive $(\infty,n)$-reversible $\infty$-magma
the equalities $j^{m}_{q}\circ1^{m-1}_{m}=1^{m-1}_{m}\circ j^{m-1}_{q}$ and the equalities $j^{m}_{m-1}\circ1^{m-1}_{m}=1^{m-1}_{m}$.

We believe that for the other equalities involving the reversible structures, the involutive structures and the reflexivity structures (see \ref{involutive-reflexivity}) for the 
strict $(\infty,n)$-categories ($n\in\mathbb{N}$), instead, a cylinder object (as in \cite{metayer:folk}) should appear between $j^{m}_{p}\circ  
 j^{m}_{p}$ and $1_{X_{m}}$, between $j^{m}_{q}\circ 1^{p}_{m}$ and $1^{p}_{m}\circ j^{p}_{q}$, and between $j^{m}_{p}\circ j^{m}_{p}$ and $1_{X_{m}}$, for those 
 underlying $(\infty,n)$-magmas of the $(\infty,n)$-categorical stretchings.
  \end{remark}

The basic examples of $(\infty,n)$-magmas are the strict $(\infty,n)$-categories.  
 Let us denote by $\mathbb{M}=(M,(j^{m}_{p})_{0\leqslant n\leqslant p<m})$ 
 and $\mathbb{M}'=(M',(j'^{m'}_{p'})_{0\leqslant n\leqslant p'<m'})$ two 
 $(\infty,n)$-magmas where $M$ and $M'$ are respectively their underlying $\infty$-magmas, and 
 $(j^{m}_{p})_{0\leqslant n\leqslant p<m}$ and  $(j'^{m'}_{p'})_{0\leqslant n\leqslant p'<m'}$ are respectively their
 underlying reversors. A morphism between these $(\infty,n)$-magmas 
 
                        \[\xymatrix{\mathbb{M}\ar[rr]^{\varphi}&&\mathbb{M}'}\]
is given by their underlying morphism of $\infty$-magmas
  
          \[\xymatrix{M\ar[rr]^{\varphi}&&M'}\] 
such that $\varphi$ preserves the $(\infty,n)$-reversible structure, which means that for integers $0\leqslant n\leqslant p<m$ we have the following commutative squares
  
       \[\xymatrix{M_{m}\ar[d]_{j^{m}_{p}}\ar[rr]^{\varphi_{m}}&&M'_{m}\ar[d]^{j'^{m}_{p}}\\
          M_{m}\ar[rr]_{\varphi_{m}}&&M'_{m} }\]   
The category of the $(\infty,n)$-magmas is denoted by $(\infty,n)$-$\mathbb{M}ag$ and it is evident to see that it is not a
 full subcategory of $\infty$-$\mathbb{M}ag$.
 
  As in section \ref{strict-omega-n-categories}, it is not difficult to show the following similar results for the $(\infty,n)$-magmas ($n\in\mathbb{N}$):
    
\begin{itemize}
  \item For each $n\in\mathbb{N}$, the category $(\infty,n)$-$\mathbb{M}ag$ is locally presentable.
  \item For each $n\in\mathbb{N}$, there is a monad $\mathbb{T}^{(\infty,n)}_{m}=(T^{(\infty,n)}_{m},\mu^{(\infty,n)}_{m},\lambda^{(\infty,n)}_{m})$ 
  on $\infty$-$\G$ ($m$ means here "magmatic") such that $\mathbb{A}lg(\mathbb{T}^{(\infty,n)}_{m})\simeq (\infty,n)-\mathbb{M}ag$.  
 \item By using Dubuc's adjoint triangle theorem we can build functors of "$(\infty,n)$-magmatifications" similar to those in section \ref{strict-omega-n-categories}.
\end{itemize}

\begin{remark}
\label{homotopy}
Let us explain some informal intuitions related to homotopy. The reader can notice that we can imagine many variations of "$\infty$-magmas" similar to those of \cite{penon1999}, or those that we propose in this paper (see above), but which still need to keep the presence of "higher symmetries", encoded by the reversors (see the section \ref{reversible-omega-graphs}), or in a bit more hidden way, by the reflexors plus some compositions $\circ^{m}_{p}$ (see section \ref{involutive-reflexivity}). For instance we can build kinds of "$\infty$-magma", their adapted "stretchings" (similar to those of section \ref{groupoidal_stretchings}), and their corresponding 
"weak $\infty$-structures" (similar to those of section \ref{Definition}). All that just by using reversors, reflexors plus compositions. Such variations of "higher structures" must be all the time projectively sketchables (see section \ref{strict-omega-n-categories}). If we restrict to take the models of such sketches in $\mathbb{S}et$, then these categories must be locally presentables and equipped with an interesting Quillen model structure. The Smith theorem could bring simplification to proving these intuitions. For instance the authors in \cite{metayer:folk} have built a folk Quillen model structure on $\omega$-$\mathbb{C}at$, by using the Smith theorem, and $\omega$-$\mathbb{C}at$ is such a "higher structure" weak equivalences were build only with reflexors and compositions. So, even though the goal of this paper is to give an algebraic approach of the weak $(\infty,n)$-categories, we believe that such structures and variations may provide us many categories with interesting Quillen model structure. Our slogan is : "enough reversors, and (or) reflexors plus some higher compositions" capture enough symmetries for doing abstract homotopy theory, based on higher category theory.
\end{remark}

 \subsection{$(\infty,n)$-Categorical stretchings}
 \label{groupoidal_stretchings}
 
 Now we are going to define $(\infty,n)$-categorical stretchings $(n\in\mathbb{N})$, which are for the weak
 $(\infty,n)$-categories what categorical stretchings 
 (see \cite{kamelkachour:defalg, penon1999}) are for weak $\infty$-categories, and we are going to use these important 
 tools to weaken the axioms of strict $(\infty,n)$-categories.
 In this paragraph the category of the categorical stretching of \cite{penon1999}
   is denoted $\infty$-$\mathbb{E}t\mathbb{C}at$.

 An $(\infty,n)$-categorical stretching is given by a categorical stretching $\mathbb{E}^{n}=(M^{n},C^{n},\pi^{n},([-,-]_{m})_{m\in\mathbb{N}})$ such that
 $M^{n}$ is an $(\infty,n)$-magma, $C^{n}$ is a strict $(\infty,n)$-category, $\pi^{n}$ is a morphism of $(\infty,n)$-$\mathbb{M}ag$, and 
 $([-,-]_{m})_{m\in\mathbb{N}})$ is an extra structure. If $m\geqslant 1$, two $m$-cells $c_1,c_0$ of $M^{n}$ are parallels if 
 $t^{m}_{m-1}(c_{1})=t^{m}_{m-1}(c_{0})$ and if $s^{m}_{m-1}(c_{1})=s^{m}_{m-1}(c_{0})$. In that case we denote it $c_1\|c_0$. 
 For the convenience of the reader we are going to recall the "bracketing structure" 
 $([-,-]_{m})_{m\in\mathbb{N}}$, which is the key structure of the Penon approach for weakened the axioms of strict 
 $\infty$-categories. Also we voluntary use the same notations as in \cite{penon1999}:
              \[([-,-]_{m}:\xymatrix{\widetilde{M}_{m}\ar[r]^{}&M_{m+1}})_{m\in\mathbb{N}}\]
 is a sequence of maps, where
 \[\widetilde{M}_{m}=\{(c_{1},c_{0})\in M_{m}\times M_{m}: c_1\|c_0\hspace{.1cm}\text{and}\hspace{.1cm}\pi_{n}(c_{1})=\pi_{n}(c_{0})\}\] 
and such that 
  \begin{itemize} 
  \item $\forall (c_{1},c_{0})\in\widetilde{M}_{m}$, $t^{m}_{m-1}([c_{1},c_{0}]_{m})=c_{1}$, $s^{m}_{m-1}([c_{1},c_{0}]_{m})=c_{0}$,
   \item $\pi_{n+1}([c_{1},c_{0}]_{m})=1^{m-1}_{m}(\pi_{m}(c_{1}))=1^{m-1}_{m}(\pi_{m}(c_{0}))$,
   \item $\forall n\in M_{m}, [c,c]_{m}=1^{m}_{m+1}(c)$.
 \end{itemize}
A morphism of $(\infty,n)$-categorical stretchings,

            \[\xymatrix{\mathbb{E}\ar[rr]^{(m,c)}&&\mathbb{E}'}\]
is given by the following commutative square in $(\infty,n)$-$\mathbb{M}ag$,
 
  \[\xymatrix{M\ar[d]_{\pi}\ar[rr]^{m}&&M'\ar[d]^{\pi'}\\
          C\ar[rr]_{c}&&C' }\]
such that $\forall m\in \mathbb{N}$, $\forall (c_{1},c_{0})\in\widetilde{M}_{m}$

     \[m_{m+1}([c_{1},c_{0}]_{m})=[m_{m}(c_{1}),m_{m}(c_{0})]_{m}\]
The category of the $(\infty,n)$-categorical stretchings is denoted $(\infty,n)$-$\mathbb{E}t\mathbb{C}at$.

As in section \ref{strict-omega-n-categories}, it is not difficult to show the following similar results for $(\infty,n)$-categorical stretchings ($n\in\mathbb{N}$):
    
\begin{itemize}
  \item For each $n\in\mathbb{N}$, the category $(\infty,n)$-$\mathbb{E}t\mathbb{C}at$ is locally presentable (see also \ref{Definition}).  
 \item By using Dubuc's adjoint triangle theorem we can build functors of "$(\infty,n)$-categorisation stretching" for any $(\infty,n+1)$-categorical stretching, and
 for any categorical stretching.
\end{itemize}
 
 \subsection{Definition}
 \label{Definition}
 
 Let us write $\mathbb{T}^{P}=(T^{P},\mu^{P},\lambda^{P})$ for Penon's monad on the $\infty$-graphs for weak $\infty$-categories.
  
 For each  $n\in\mathbb{N}$ consider the forgetful functors
 \[\xymatrix{\ninfet\ar[r]^(.6){U_{n}}&\infG}\]
 given by $\xymatrix{(M,C,\pi,([,])_{m\in\mathbb{N}})\ar@{|->}[r]^{}&M }$
 
 Also, for each $n\in\mathbb{N}$ the categories $\ninfet$ and $\infty$-$\G$ are sketchable (in section \ref{strict-omega-n-categories} we call $\mathcal{G}$
 the sketch of $\infty$-graphs). Let us call $\mathcal{E}_{n}$ the sketch of $(\infty,n)$-categorical stretchings. These sketches are both projective and 
 there is an easy inclusion $\mathcal{G}\subset \mathcal{E}_{n}$. This inclusion of sketches produces, in passing to models, a functor $W_{n}$ between 
 the category of models $\mathbb{M}od(\mathcal{E}_{n})$ and the category of models $\mathbb{M}od(\mathcal{G})$:
 
  \[\xymatrix{\mathbb{M}od(\mathcal{E}_{n})\ar[r]^{W_{n}}&\mathbb{M}od(\mathcal{G})}\] 
and the associated sheaf theorem for sketches of Foltz (\cite{foltz}) proves that $W_{n}$ has a left adjoint. 
 Furthermore we show that $\mathbb{M}od(\mathcal{E}_{n})\simeq (\infty,n)\text{-}\mathbb{E}t\mathbb{C}at$  
  is an equivalence of categories. Thus the following commutative square induced by these equivalences
  
  \[\xymatrix{\mathbb{M}od(\mathcal{E}_{n})\ar[r]^{W_{n}}\ar[d]_{\wr}&\mathbb{M}od(\mathcal{G})\ar[d]^{\wr}\\
   (\infty,n)\text{-}\mathbb{E}t\mathbb{C}at\ar[r]^(.55){U_{n}}&\IG}\]
produces the required left adjoint $F_{n}$ of $U_{n}$
  
  \[\xymatrix{\ninfet\ar[r]<+5pt>^(.6){U_{n}}&
    \infG\ar[l]<+4pt>^(.4){F_{n}}_(.4){\top}}\] 
The unit and the counit of this adjunction are respectively denoted $\lambda^{(\infty,n)}$ and $\varepsilon^{(\infty,n)}$.

 This adjunction generates a monad $\mathbb{T}^{(\infty,n)}=(T^{(\infty,n)},\mu^{(\infty,n)},\lambda^{(\infty,n)})$ on $\IG$.
 
 \begin{definition}
For each $n\in\mathbb{N}$, a weak $(\infty,n)$-category is an algebra for the monad $\mathbb{T}^{(\infty,n)}=(T^{(\infty,n)},\mu^{(\infty,n)},\lambda^{(\infty,n)})$ on $\IG$.
 \end{definition}

 \begin{remark}
\label{simplification_des_notations}
For each $n\in\mathbb{N}$, when no confusion occurs, we will simplify the notation of these monads:
 $\mathbb{T}^{n}=(T^{n},\mu^{n},\lambda^{n})=\mathbb{T}^{(\infty,n)}=(T^{(\infty,n)},\mu^{(\infty,n)},\lambda^{(\infty,n)})$,
 by omitting the symbol $\infty$. 
\end{remark} 
 For each
$n\in\mathbb{N}$, the category $\mathbb{A}lg(\mathbb{T}^{(\infty,n)})$ is locally presentable. As a matter of fact, the adjunction
$\xymatrix{\ninfet\ar[r]<+5pt>^(.6){U_{n}}&\infG\ar[l]<+4pt>^(.4){F_{n}}_(.4){\top}}$ involves
the categories $\ninfet$ and $\infty$-$\G$ which are both accessible (because they are both projectively sketchable thus locally presentable). But the
forgetful functor $U_{n}$ has a left adjoint, thus thanks to the proposition 5.5.6 of \cite{francisborceux:handbook2}, it preserves filtered colimits. 
Thus the monad $\mathbb{T}^{n}$ preserves filtered colimits in the locally presentable category $\infty$-$\G$, and the theorem 5.5.9 
of \cite{francisborceux:handbook2} implies that the category $\mathbb{A}lg(\mathbb{T}^{(\infty,n)})$ is locally presentable as well.

Now we are going to build some functors of "weak $(\infty,n)$-categorification" by using systematically Dubuc's adjoint triangle theorem 
 (see \cite{dubuc:kanextensionenriched}). 
 For all $n\in\mathbb{N}$ we have the following triangle in $\mathbb{C}AT$
\[\xymatrix{\mathbb{A}lg(\mathbb{T}^{n})\ar[rrr]^(.45){V_{n}}\ar[rd]_{U_{n}}&&&\ar[dll]^{U_{n+1}}\mathbb{A}lg(\mathbb{T}^{n+1})\\
 &\infG}\] 
The functors $V_{n}$ can be thought of as forgetful functors which 
 forgets the reversors $(i^{m}_{n})_{m\geqslant n+2}$ for each weak $(\infty,n)$-category
 (see \ref{algebre-magma} for the definition of the reversors produced by each weak $(\infty,n)$-category).
 We have the 
 adjunctions $F_{n}\dashv{U_{n}}$ and  $F_{n+1}\dashv{U_{n+1}}$, where in particular
 $U_{n+1}V_{n}=U_{n}$ and $U_{n+1}$ is monadic. So we can apply Dubuc's adjoint triangle theorem 
 (see \cite{dubuc:kanextensionenriched}) to show that the functor $V_{n}$ has a left adjoint: $L_{n}\dashv{V_{n}}$. 
 For each weak $(\infty,n+1)$-category $C$, the left adjoint $L_{n}$ of $V_{n}$ yields the free weak $(\infty,n)$-category $L_{n}(C)$ associated to $C$. 
 $L_{n}$ can be seen as the "free weak $(\infty,n)$-categorification functor" for weak $(\infty,n+1)$-categories.  
 
 We can apply the same argument to the following triangles in $\mathbb{C}AT$ (where here the functor $V$  forgets all the reversors) 
 
  \[\xymatrix{\mathbb{A}lg(\mathbb{T}^{n})\ar[rrr]^{V}\ar[rd]_{U_{n}}&&&\ar[dll]^{U}\mathbb{A}lg(\mathbb{T}^{P})\\
 &\infG}\] 
to prove that the functor $V$ has a left adjoint: $L\dashv{V}$. For each weak $\infty$-category $C$, the left adjoint $L$ of $V$ builds the free weak 
 $(\infty,n)$-category $L_{n}(C)$ associated to $C$. $L$ is the "free weak $(\infty,n)$-categorification functor" for weak $\infty$-categories.

 \subsection{Magmatic properties of weak $(\infty,n)$-categories ($n\in\mathbb{N}$)}
 \label{algebre-magma}
  
If $(G,v)$ is a $\mathbb{T}^{n}$-algebra then $\xymatrix{G\ar[r]^(.45){\lambda^{n}_{G}}&\T^{n}(G)}$
 is the associated universal map and $\xymatrix{M^{n}(G)\ar[r]^{\pi^{n}_{G}}&C^{n}(G)}$ is the free $(\infty,n)$-categorical 
 stretching associated to $G$, and we write $(\star^{m}_{p})_{0\leqslant p<m}$ for the composition laws of $M^{n}(G)$. 
Also let us define the following composition laws on $G$: If $a,b\in G(m)$ are such that $s^{m}_{p}(a)=t^{m}_{p}(b)$ then we put
$a\circ^{m}_{p}b=v_{m}(\lambda^{n}_{G}(a)\star^{m}_{p}\lambda^{n}_{G}(b))$, if $a\in G(p)$ then we put $\iota^{p}_{m}(a):=v_{m}(1^{p}_{m}(\lambda^{n}_{G}(a)))$,
and if $a\in G(m)$ and $0\leqslant n\leqslant p<m$ then we put $i^{m}_{p}(a):=v_{m}(j^{m}_{p}(\lambda^{n}_{G}(a)))$. It is easy to show that with these
definitions, the $\mathbb{T}^{n}$-algebra $(G,v)$ puts on $G$ an $(\infty,n)$-magma structure.

In \cite{penon1999} the author showed that if $a,b$ are $m$-cells of $\T^{n}(G)$ such that $s^{m}_{p}(a)=t^{m}_{p}(b)$ then 
$v_{m}(a\star^{m}_{p}b)=v_{m}(a)\circ^{m}_{p}v_{m}(b)$. We are going to show that if $a$ is a $p$-cell of $\T^{n}(G)$ such that
$0\leqslant n\leqslant p<m$ then $v_{m}(1^{p}_{m}(a))=\iota^{p}_{m}(v_{p}(a))$ and if $a$ is an $m$-cell of $T^{n}(G)$ such that $0\leqslant n\leqslant p<m$
then $v_{m}(j^{m}_{p}(a))=i^{m}_{p}(v_{m}(a))$. In other words, each underlying morphism 
$\xymatrix{\T^{n}(G)\ar[r]^(.6){v}&G}$ in $\infty$-$\G$ of a weak $(\infty,n)$-category $(G,v)$ is also a morphism of 
$(\infty,n)$-$\mathbb{M}ag$ when we consider as equipped with the $(\infty,n)$-magmatic structures that we have defined above.
Proofs of these magmatic properties became standard after the work in \cite{kamelkachour:defalg,penon1999}, but for the
comfort of the reader we are going to give complete proof.

All reversors for algebras are denoted "$i^{m}_{p}$" because there is no risk of confusion. The $\mathbb{T}^{n}$-algebra $(\T^{n}(G),\mu^{n}_{G})$ on
$\T^{n}(G)$ is an $(\infty,n)$-reversible structure $(i^{m}_{p})_{0\leqslant n\leqslant p<m}$ such that for all $t$ in $\T^{n}(G)(m)$ we have 
$j^{m}_{p}(t)=i^{m}_{p}(t)$. As a matter of fact
          $i^{m}_{p}(t):=\mu^{n}_{G}(j^{m}_{p}(\lambda^{n}_{T^{n}(G)}(t)))=j^{m}_{p}(\mu^{n}_{G}(\lambda^{n}_{T^{n}(G)}(t)))$
because $\mu^{n}_{G}$ forgets that a morphism preserves the involutions, so 
$i^{m}_{p}(t)=j^{m}_{p}(t)$. Furthermore a morphism of $\T^{n}$-algebras
$\xymatrix{(G,v)\ar[r]^{f}&(G',v')}$ is such that for all $t\in G(m)$ with $0\leqslant n\leqslant p<m$ we have 
$f(i^{m}_{p}(t))=i^{m}_{p}(f(t))$. Indeed 
$f(i^{m}_{p}(t))=f(v_{m}(j^{m}_{p}(\lambda^{n}_{G}(t))))=v'_{m}(T^{n}(f)(j^{m}_{p}(\lambda^{n}_{G}(t))))
=v'_{m}(j^{m}_{p}(T^{n}(f)(\lambda^{n}_{G}(t))))$
because $\T^{n}(f)$ forgets that a morphism preserves the reversors thus:
$f(i^{m}_{p}(t))=v'_{m}(j^{m}_{p}(T^{n}(f)(\lambda^{n}_{G}(t))))=v'(j^{m}_{p}(\lambda^{n}_{G'}(f(t))$. Thus, 
because a $\T^{n}$-algebra $(G,v)$ is determines a morphism of $\T^{n}$-algebras:
     \[\xymatrix{(\mu^{n}_G,\T^{n}(G))\ar[r]^{v}&(G,v)}\]
we have the useful formula $v_{m}(j^{m}_{p}(t))=i^{m}_{p}(v_{m}(t))$.

All reflexors for algebras are denoted "$\iota^{p}_{m}$" because there is no risk of confusion, and we use the symbols "$1^{p}_{m}$" for reflexors coming from the free
$(\infty,n)$-categorical stretchings. The $\mathbb{T}^{n}$-algebra $(\T^{n}(G),\mu^{n}_{G})$ put on $\T^{n}(G)$ a reflexive structure $(\iota^{p}_{m})_{0\leqslant p<m}$
such that for all $t$ in $\T^{n}(G)(p)$ we have $1^{p}_{m}(t)=\iota^{p}_{m}(t)$. As a matter of fact
  $\iota^{p}_{m}(t):=\mu^{n}_{G}(1^{p}_{m}(\lambda^{n}_{\T^{n}(G)}(t)))=1^{p}_{m}(\mu^{n}_{G}(\lambda^{n}_{\T^{n}(G)}(t)))$ because
$\mu^{n}_{G}$ forgets that a morphism preserves the reflexivities, so $\iota^{p}_{m}(t)=1^{p}_{m}(t)$.           
 Furthermore a morphism of $\T^{n}$-algebras
$\xymatrix{(G,v)\ar[r]^{f}&(G',v')}$ is such that for all $t\in G(p)$ with $0\leqslant p<m$, we have 
$f(\iota^{p}_{m}(t))=\iota^{p}_{m}(f(t))$. Indeed
$f(\iota^{p}_{m}(t))=f(v_{m}(1^{p}_{m}(\lambda^{n}_{G}(t))))=v'_{m}(\T^{n}(f)(1^{p}_{m}(\lambda^{n}_{G}(t))))
=v'_{m}(1^{p}_{m}(T^{n}(f)(\lambda^{n}_{G}(t))))$ because $\T^{n}(f)$ forgets that a morphism preserves the reflexors,
so $f(\iota^{p}_{m}(t))=v'(1^{p}_{m}(\T^{n}(f)(\lambda^{n}_{G}(t))))=v'(1^{p}_{m}(\lambda^{n}_{G'}(f(t))))=\iota^{p}_{m}(f(t))$. 
Thus, because a $\T^{n}$-algebra $(G,v)$ is also a morphism of $\T^{n}$-algebras:
     \[\xymatrix{(\mu^{n}_G,\T^{n}(G))\ar[r]^(.6){v}&(G,v)}\]
thus we have the useful formula $v_{m}(1^{p}_{m}(t))=\iota^{p}_{m}(v_{m}(t))$. 
 
\subsection{Interactions between reversibility structures, involutive structures, and reflexivity structures}
\label{interactions-revers-invo-reflex}
 
 The reversors for strict $(\infty,n)$-categories and for $(\infty,n)$-magmas are denoted by "j", whereas the reversors
 for weak $(\infty,n)$-categories are denoted by "i". Let us fix an $n\in\mathbb{N}$ and a strict $(\infty,n)$-category $C$. 
 We know that in $C$ (see section \ref{strict-omega-n-categories}) we have for each
 $0\leqslant n \leqslant p<m$ the involutive properties $j^{m}_{p}\circ j^{m}_{p}=1_{X_{m}}$. However for 
 weak $(\infty,n)$-categories this property does not even up to coherence cell; yet reversors of type $i^{m+1}_{m}$ do permit to weakened version of the  
  involutive property. As a matter of fact consider now a weak $(\infty,n)$-category $(G,v)$,
  the free $(\infty,n)$-categorical stretching $\xymatrix{M^{n}(G)\ar[r]^{\pi^{n}_G}&C^{n}(G)}$ associated to $G$  
  and the universal map
   $\xymatrix{G\ar[r]^(.45){\lambda^{n}_G}&\T^{n}(G)}$.
  For each $\alpha\in G(m+1)$ 
 we have $i^{m+1}_{m}(i^{m+1}_{m}(\alpha))=i^{m+1}_{m}(v(j^{m+1}_{m}(\lambda^{m}_G(\alpha))))=v(j^{m+1}_{m}(j^{m+1}_{m}(\lambda^{m}_G(\alpha))))$
 because the algebra $(G,v)$ preserves the reversibility structure (see \ref{computations}). It shows that $i^{m+1}_{m}(i^{m+1}_{m}(\alpha))\|\alpha$ because
 \begin{align*}
 s^{m+1}_{m}(i^{m+1}_{n}(i^{m+1}_{m}(\alpha)))&=s^{m+1}_{m}(v_{m+1}(j^{m+1}_{m}(j^{m+1}_{m}(\lambda^{m}_G(\alpha)))))\\
 &=v_{m}(s^{m+1}_{m}(j^{m+1}_{m}(j^{m+1}_{m}(\lambda^{m}_G(\alpha)))))\\
 &=v_{m}(t^{m+1}_{m}(j^{m+1}_{m}(\lambda^{m}_G(\alpha))))\\
 &=v_{m}(s^{m+1}_{m}(\lambda^{m}_G(\alpha)))\\
 &=v_{m}(\lambda^{m}_G(s^{m+1}_{m}(\alpha)))\\
 &=s^{m+1}_{m}(\alpha), 
 \end{align*}
 and similarly we show
 that $t^{m+1}_{m}(i^{m+1}_{m}(i^{m+1}_{m}(\alpha)))=t^{m+1}_{m}(\alpha)$. 
  
 But also in the free $(\infty,n)$-categorical
  stretching associated to $G$, which control the algebricity of $(G,v)$, creates between the $m+1$-cells $j^{m+1}_{m}(j^{m+1}_{m}(\lambda^{m}_G(\alpha)))$
  and $\lambda^{m}_G(\alpha)$ an $(m+2)$-cell of coherence: 
  \[[j^{m+1}_{m}(j^{m+1}_{m}(\lambda^{m}_G(\alpha)));\lambda^{m}_G(\alpha)]_{m+1}.\] 
Thus at the level of algebra it shows that
  there is a coherence cell between $i^{m+1}_{m}(i^{m+1}_{m}(\alpha))$ and $\alpha$.  
  
 The other equalities
 $j^{m}_{p}\circ j^{m}_{p}=1_{X_{m}}$ $(0\leqslant n<p<m)$ which are valid in any strict $(\infty,n)$-category have no reasons
 to be weakened in any weak $(\infty,n)$-category for the simple reasons that the axioms of $(\infty,n)$-reversibility structure
 do not imply parallelism between the $m$-cells $i^{m}_{p}(i^{m}_{p}(\alpha))$ and $\alpha$ when $p>n$. However
 we believe that for any $m$-cell $\alpha$ of any weak $(\infty,n)$-category $(G,v)$, if $0\leqslant n<p<m$, then there exists between
 the $m$-cells $i^{m}_{p}(i^{m}_{p}(\alpha))$ and $\alpha$, a cylinder object in the sense of \cite{metayer:folk}. 
 
Now let us fix an $n\in\mathbb{N}$ and a strict $(\infty,n)$-category $C$. We know that for each $p$-cell $\alpha$ 
of $C$ and for each $0\leqslant n\leqslant q<p<m$ we have the equalities
 $j^{m}_{q}(1^{p}_{m}(\alpha))=1^{p}_{m}(j^{p}_{q}(\alpha))$ but also for each $0\leqslant n\leqslant q<m$ and $0\leqslant p\leqslant q$ we have the equalities
 $j^{m}_{q}(1^{p}_{m}(\alpha))=1^{p}_{m}(\alpha)$ (see section \ref{strict-omega-n-categories}). However for a 
 weak $(\infty,n)$-category $(G,v)$ and for any $p$-cell $\alpha$ in it, the $(\infty,n)$-reversibility structure, for each $0\leqslant n\leqslant q<p<m$,
  doesn't ensure the parallelism between the $m$-cells $i^{m}_{q}(\iota^{p}_{m}(\alpha))$ and $\iota^{p}_{m}(i^{p}_{q}(\alpha))$, and 
  for each $0\leqslant n\leqslant q<m$ and $0\leqslant p\leqslant q$, doesn't ensure the parallelism between the $m$-cells $i^{m}_{q}(\iota^{p}_{m}(\alpha))$ 
  and $\iota^{p}_{m}(\alpha)$.
  
 Thus these equalities which are true in the strict case are not necessarily weakened in the weak case. However there are 
  certain situations where in the weak case these equalities are replaced by some coherence cells. As a matter of fact if now $(G,v)$ is
  a weak $(\infty,n)$-category then it is easy to check, thanks to the axioms for the $(\infty,n)$-reversibility structure (see section \ref{reversible-omega-graphs}) that
  if $p=m-1$ and $0\leqslant n\leqslant q<m-1$, then for any $(m-1)$-cell $\alpha$ of $(G,v)$ we have 
  $i^{m}_{q}(\iota^{m-1}_{m}(\alpha))\|\iota^{m-1}_{m}(i^{m-1}_{q}(\alpha))$. Indeed, we have 
 \begin{align*}
  i^{m}_{q}(\iota^{m-1}_{m}(\alpha))&=i^{m}_{q}(v_{m}(1^{m-1}_{m}(\lambda^{n}_{G}(\alpha))))\\
  &=v_{m}(j^{m}_{q}(1^{m-1}_{m}(\lambda^{n}_G(\alpha)))).
 \end{align*}
Thus
  \begin{align*}
  s^{m}_{m-1}(i^{m}_{q}(\iota^{m-1}_{m}(\alpha)))&=s^{m}_{m-1}(v_{m}(j^{m}_{q}(1^{m-1}_{m}(\lambda^{n}_{G}(\alpha)))))\\
  &=v_{m-1}(s^{m}_{m-1}(j^{m}_{q}(1^{m-1}_{m}(\lambda^{n}_{G}(\alpha)))))\\  
  &=v_{m-1}(j^{m-1}_{q}(s^{m}_{m-1}(1^{m-1}_{m}(\lambda^{n}_{G}(\alpha)))))\\ 
  &=v_{m-1}(j^{m-1}_{q}(\lambda^{n}_{G}(\alpha)))\\
  &=i^{m-1}_{q}(\alpha)\\
  &=v_{m-1}(\lambda^{n}_{G}(i^{m-1}_{q}(\alpha)))\\
  &=v_{m-1}(s^{m}_{m-1}(1^{m-1}_{m}(\lambda^{n}_{G}(i^{m-1}_{q}(\alpha)))))\\
  &=s^{m}_{m-1}(v_{m}(1^{m-1}_{m}(\lambda^{n}_{G}(i^{m-1}_{q}(\alpha)))))\\
  &=s^{m}_{m-1}(\iota^{m-1}_{m}(i^{m-1}_{q}(\alpha))),
 \end{align*}
  and similarly we see that $t^{m}_{m-1}(i^{m}_{q}(\iota^{m-1}_{m}(\alpha)))=t^{m}_{m-1}(\iota^{m-1}_{m}(i^{m-1}_{q}(\alpha)))$. 
But also in the free $(\infty,n)$-categorical stretching associated to $G$, which control the algebricity of $(G,v)$, 
between the $m$-cells $j^{m}_{q}(1^{m-1}_{m}(\lambda^{n}_G(\alpha)))$ and
$1^{m-1}_{m}(j^{m-1}_{q}(\lambda^{n}_{G}(\alpha)))$, an $(m+1)$-cell of coherence is created:
 \[[j^{m}_{q}(1^{m-1}_{m}(\lambda^{n}_G(\alpha)));1^{m-1}_{m}(j^{m-1}_{q}(\lambda^{n}_{G}(\alpha)))]_{m}.\]
  Thus at the level of algebras it shows
 that there is a coherence cell between $i^{m}_{q}(\iota^{m-1}_{m}(\alpha))$ and $\iota^{m-1}_{m}(i^{m-1}_{q}(\alpha))$.
 
Furthermore, thanks to the $(\infty,n)$-reversibility structure (see section \ref{reversible-omega-graphs}) we easily prove that for $p=q=m-1$ we have for any
  $(m-1)$-cells $\alpha$ of any weak $(\infty,n)$-category $(G,v)$ the parallelism 
  $i^{m}_{m-1}(\iota^{m-1}_{m}(\alpha))\|\iota^{m-1}_{m}(\alpha)$. As a matter of fact
  \begin{align*}
  i^{m}_{m-1}(\iota^{m-1}_{m}(\alpha))&=i^{m}_{m-1}(v_{m}(1^{m-1}_{m}(\lambda^{n}_{G}(\alpha))))\\
  &=v_{m}(j^{m}_{m-1}(1^{m-1}_{m}(\lambda^{n}_{G}(\alpha)))), 
 \end{align*}
Thus
 \begin{align*}
  s^{m}_{m-1}(i^{m}_{m-1}(\iota^{m-1}_{m}(\alpha)))&=s^{m}_{m-1}(v_{m}(j^{m}_{m-1}(1^{m-1}_{m}(\lambda^{n}_{G}(\alpha)))))\\
  &=v_{m-1}(s^{m}_{m-1}(j^{m}_{m-1}(1^{m-1}_{m}(\lambda^{n}_{G}(\alpha)))))\\
  &=v_{m-1}(t^{m}_{m-1}(1^{m-1}_{m}(\lambda^{n}_{G}(\alpha))))\\
  &=v_{m-1}(\lambda^{n}_{G}(\alpha))\\
  &=\alpha\\
  &=s^{m}_{m-1}(\iota^{m-1}_{m}(\alpha)), 
 \end{align*}
and similarly we show that 
  $t^{m}_{m-1}(i^{m}_{m-1}(1^{m}_{m-1}(\alpha)))=t^{m}_{m-1}(1^{m}_{m-1}(\alpha))$.
But also in the free $(\infty,n)$-categorical
  stretching associated to $G$, which controls the algebricity of $(G,v)$, between the $m$-cells $j^{m}_{m-1}(1^{m-1}_{m}(\lambda^{n}_{G}(\alpha)))$
  and $1^{m-1}_{m}(\lambda^{n}_{G}(\alpha))$ creates an $(m+1)$-cell of coherence 
  \[[j^{m}_{m-1}(1^{m-1}_{m}(\lambda^{n}_{G}(\alpha)));1^{m-1}_{m}(\lambda^{n}_{G}(\alpha))]_{m}.\] 
Thus  at the level of algebras it shows that there is a coherence cell between $i^{m}_{m-1}(\iota^{m-1}_{m}(\alpha))$ and $\iota^{m-1}_{m}(\alpha)$.  
  
  For the other equalities $j^{m}_{q}(1^{p}_{m}(\alpha))=1^{p}_{m}(j^{p}_{q}(\alpha))$ (for $0\leqslant n\leqslant q<p<m$ and $p\neq m-1$) and 
  $j^{m}_{q}(1^{p}_{m}(\alpha))=1^{p}_{m}(\alpha)$ (for $0\leqslant n\leqslant q<m$, $0\leqslant p\leqslant q$ and $p,q\neq m-1$), 
  which are valid in any strict $(\infty,n)$-category, they have no reasons to be weakened in any weak $(\infty,n)$-category for the simple reason that 
  the axioms of the $(\infty,n)$-reversibility structure doesn't imply the parallelism between these $m$-cells $i^{m}_{q}(\iota^{p}_{m}(\alpha))$ and 
  $\iota^{p}_{m}(i^{p}_{q}(\alpha))$, or between the $m$-cells $i^{m}_{q}(\iota^{p}_{m}(\alpha))$ and $\iota^{p}_{m}(\alpha)$. However
 we believe that for any weak $(\infty,n)$-category $(G,v)$, between such $m$-cells $i^{m}_{q}(\iota^{p}_{m}(\alpha))$ and 
  $\iota^{p}_{m}(i^{p}_{q}(\alpha))$ and between such $m$-cells $i^{m}_{q}(\iota^{p}_{m}(\alpha))$ and $\iota^{p}_{m}(\alpha)$,
 they are cylinder objects in the sense of \cite{metayer:folk}.   
  
\subsection{Computations in dimension 1 and in dimension 2}
\label{computations}

We are going to see that in dimension $1$, algebras for the monad $\mathbb{T}^{0}$ (see reference \ref{simplification_des_notations} for this notation) of the weak 
$(\infty,0)$-categories (see section \ref{weak-omega-n-categories}), commonly called in the literature weak $\infty$-groupoids, are  
usual groupoids, and in dimension $2$, algebras for this monad give rise to bigroupoids where in particular we are
are going to show that each $1$-cell are equivalences. First let us recall some basic definitions
that we can find in \cite{kamelkachour:defalg}. A reflexive $\infty$-graph has dimension $p\in \mathbb{N}$ if all its $q$-cells for which
$q>p$ are identity cells and if there is at least one $p$-cell which is not an identity cell. Thus
 reflexive $\infty$-graphs of dimension $0$ are just sets.
 An $(\infty,0)$-categorical stretching $\mathbb{E}^{0}=(M^{0},C^{0},\pi^{0},([,])_{m\in\mathbb{N}})$ (that we can also call "groupoidal stretching" by analogy with the
 "categorical stretchings" of Penon) 
has dimension $p\in\mathbb{N}$ if the underlying reflexive $\infty$-graph of $M$ has dimension $p$. 
A $\mathbb{T}^{0}$-algebra $(G;v)$ has dimension $p\in\mathbb{N}$ if $G$ has dimension 
$p$ when $G$ is considered with its canonical reflexivity structure (see \ref{algebre-magma}). 

\subsection{Dimension 1}

 \begin{proposition}
 Let $(G;v)$ be a $\mathbb{T}^{0}$-algebra of dimension $1$ and let 
 $\xymatrix{a\ar[r]^{f}&b}$ be a $1$-cell of $G$. Then $f$ is an $\circ^{1}_{0}$-isomorphism.
\end{proposition}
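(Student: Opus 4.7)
The plan is to take the reversor $g := i^{1}_{0}(f)$ as the candidate $\circ^{1}_{0}$-inverse of $f$, and to use the coherence $2$-cells supplied by the free $(\infty,0)$-categorical stretching together with the dimension hypothesis to force the required identities on the nose in $G$.

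First I would check source and target: by the axioms of the $(\infty,0)$-reversible structure, $g$ is a $1$-cell $b \to a$, so $f \circ^{1}_{0} g$ is a $1$-cell $b \to b$ and $g \circ^{1}_{0} f$ is a $1$-cell $a \to a$. Next, working in the free $(\infty,0)$-categorical stretching $\pi^{0}_{G} \colon M^{0}(G) \to C^{0}(G)$ associated to $G$, I would consider the two parallel $1$-cells
\[
\lambda^{0}_{G}(f) \star^{1}_{0} j^{1}_{0}(\lambda^{0}_{G}(f)), \qquad 1^{0}_{1}(\lambda^{0}_{G}(b)).
\]
They agree after applying $\pi^{0}_{G}$, since $C^{0}(G)$ is a strict $(\infty,0)$-category and in the strict setting the reversor $j^{1}_{0}$ returns the genuine $\circ^{1}_{0}$-inverse (see section \ref{strict-omega-n-categories}). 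Hence the pair lies in the domain of the bracketing operation at level $1$, and produces a coherence $2$-cell
\[
\bigl[\, 1^{0}_{1}(\lambda^{0}_{G}(b)),\; \lambda^{0}_{G}(f) \star^{1}_{0} j^{1}_{0}(\lambda^{0}_{G}(f)) \,\bigr]_{1}
\]
in $M^{0}(G)$, whose source is $\lambda^{0}_{G}(f) \star^{1}_{0} j^{1}_{0}(\lambda^{0}_{G}(f))$ and whose target is $1^{0}_{1}(\lambda^{0}_{G}(b))$.

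Applying the algebra structure map $v_{2}$ to this $2$-cell, and using the magmatic compatibility of $v$ with $\star^{1}_{0}$, $j^{1}_{0}$ and $1^{0}_{1}$ established in section \ref{algebre-magma} together with the unit identity $v \circ \lambda^{0}_{G} = \mathrm{id}_{G}$, the image is a $2$-cell in $G$ whose source is $f \circ^{1}_{0} g$ and whose target is $\iota^{0}_{1}(b)$. Since $(G,v)$ has dimension $1$, every $2$-cell of $G$ is an identity $2$-cell and therefore has equal source and target; so $f \circ^{1}_{0} g = \iota^{0}_{1}(b)$. The symmetric argument, built from the coherence $2$-cell comparing $j^{1}_{0}(\lambda^{0}_{G}(f)) \star^{1}_{0} \lambda^{0}_{G}(f)$ with $1^{0}_{1}(\lambda^{0}_{G}(a))$, yields $g \circ^{1}_{0} f = \iota^{0}_{1}(a)$, and so $f$ is a $\circ^{1}_{0}$-isomorphism with inverse $g$.

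The main obstacle I anticipate is the bridge from the coherence $2$-cell in $M^{0}(G)$ to a strict equality in $G$. It is not automatic; it relies jointly on the magmatic compatibility of $v$, which transports $\star^{1}_{0}$, $j^{1}_{0}$ and $1^{0}_{1}$ to $\circ^{1}_{0}$, $i^{1}_{0}$ and $\iota^{0}_{1}$, and on the dimensional collapse that turns every $2$-cell of $G$ into an identity and hence equates its source and target. Everything else is bookkeeping on positional axioms.
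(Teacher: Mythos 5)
Your proposal is correct and follows essentially the same route as the paper's own proof: produce the coherence $2$-cell $[\,\lambda^{0}_{G}(f)\star^{1}_{0}j^{1}_{0}(\lambda^{0}_{G}(f));1^{0}_{1}(\lambda^{0}_{G}(b))\,]_{1}$ in $M^{0}(G)$ from the fact that both arguments have the same image under $\pi^{0}_{G}$ (the strict $\infty$-groupoid $C^{0}(G)$ having genuine inverses), push it down along $v$ using the magmatic compatibilities, and invoke the dimension-$1$ hypothesis to collapse the resulting $2$-cell to an identity. The only difference is notational (the order of arguments in the bracket), which does not affect the argument.
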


\begin{proof}

 Let us denote by $\xymatrix{G\ar[r]^(.4){\lambda^{0}_G}&\mathbb{T}^{0}(G)}$ the universal map associated to $G$ and by 
  $\xymatrix{M^{0}(G)\ar[r]^{\pi^{0}_G}&C^{0}(G)}$ the free groupoidal stretching associated to $G$. First
  we are going to show that in $M^{0}(G)$ lives a diagram of the type
 \[
      \xymatrix{\lambda^{0}_G(f)\star^{1}_{0}j^{1}_{0}\bigl(\lambda^{0}_G(f)\bigr)
      \ar@{=>}[r]^(.6){\beta}&1^{0}_{1}\bigl(\lambda^{0}_G(b)\bigl)}
 \] 
Since $\pi_G$ is a morphism of $\infty$-magmas, we have  that
\begin{align*}
 \pi^{0}_G \bigl(\lambda^{0}_G(f)\star^{1}_{0}j^{1}_{0}\bigl(\lambda^{0}_G(f)\bigr)\bigr)
        &=\pi^{0}_G\bigl(\lambda^{0}_G(f)\bigr)\circ^{1}_{0}\pi_G\bigl(j^{1}_{0}\bigl(\lambda^{0}_G(f)\bigr)\bigr)\\
        &=\pi^{0}_G\bigl(\lambda^{0}_G(f)\bigr)\circ^{1}_{0}j^{1}_{0}\bigl(\pi^{0}_G\bigl(\lambda^{0}_G(f)\bigr)\bigr)\\
          &=1^{0}_{1}\bigl(t^{1}_{0}\bigl(\pi_G\bigl(\lambda^{0}_G(f)\bigl)\bigl)\bigl)\\
        &=1^{0}_{1}\bigl(\pi_G\bigl(\lambda^{0}_G(b)\bigr)\bigr)
        =\pi^{0}_G\bigl(1^{0}_{1}\bigl(\lambda^{0}_G(b)\bigr)\bigr)\;,      
 \end{align*}
where the second equality holds because $\pi^{0}_G$ respects the $(\infty,0)$-reversible structures,
and the third equality holds because $\pi^{0}_G \bigl(\lambda^{0}_G(f) \bigr)$ 
and $j^{1}_{0}\pi^{0}_G\bigl(\lambda^{0}_G(f) \bigr)$ are $\circ^{1}_{0}$-inverse
within the strict $\infty$-groupoid $C^{0}(G)$. Thus by the contractibility structure in $M^{0}(G)$, we get the following coherence $2$-cell in $M^{0}(G)$ 
 \[
 \xymatrix{\lambda^{0}_G(f)\star^{1}_{0}j^{1}_{0}(\lambda^{0}_G(f))
      \ar@{=>}[rrrrr]^(.55){\beta=[\lambda^{0}_G(f)\star^{1}_{0}j^{1}_{0}(\lambda^{0}_G(f));1^{0}_{1}(\lambda^{0}_G(b))]_{1}}
      &&&&&1^{0}_{1}\bigl(\lambda^{0}_G(b)\bigr)}
 \]      
  By applying to it the $\mathbb{T}^{0}$-algebra $(G;v)$ we obtain the following $2$-cell in $G$
 \[
     \xymatrix{
     v\bigl(\lambda^{0}_G(f)\star^{1}_{0}j^{1}_{0}\bigl(\lambda^{0}_G(f)\bigr)\bigr)
      \ar@{=>}[rr]^(.55){v(\beta)}
      &&v\bigl(1^{0}_{1}\bigl(\lambda^{0}_G(b)\bigr)\bigr)}
 \]  
 with
 \[
        v\bigl(\lambda^{0}_G(f)\star^{1}_{0}j^{1}_{0}\bigl(\lambda^{0}_G(f)\bigr)\bigr)
        =v\bigl(\lambda^{0}_G(f)\bigr)\circ^{1}_{0}v\bigl(j^{1}_{0}\bigl(\lambda^{0}_G(f)\bigr)\bigr)\\
        = f\circ^{1}_{0}i^{1}_{0}(f)
 \]
because $v$ is a morphism of $\infty$-magmas.  

 Recall that we have $\iota^{0}_{1}(b):=v\bigl(1^{0}_{1}\bigl(\lambda^{0}_G(b)\bigr)\bigr)$. 
 Thus we obtain the following $2$-cell in $G$:
 \[
 \xymatrix{f\circ^{1}_{0}i^{1}_{0}(f)
       \ar@{=>}[rr]^(.55){v(\beta)}
      &&\iota^{0}_{1}(b)\;.}
\]        
   But the $\mathbb{T}^{0}$-algebra $(G;v)$ has dimension $1$, thus
 $v(\beta)$ is an identity, which shows that  $f\circ^{1}_{0}i^{1}_{0}(f)=\iota^{0}_{1}(b)$. 
 
 By the same method we can prove that   $i^{1}_{0}(f)\circ^{1}_{0}f=\iota^{0}_{1}(a)$. 
 Thus $f$ is an $\circ^{1}_{0}$-isomorphism 
\end{proof}

\subsection{Dimension 2}
\begin{proposition}
 Let $(G;v)$ be a $\mathbb{T}^{0}$-algebra of dimension $2$ and let
 $\xymatrix{a\ar[r]^{f}&b}$ be a $1$-cell of $G$. Then $f$ is an equivalence.
\end{proposition}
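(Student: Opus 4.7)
The strategy is to mimic the dimension~$1$ proof, producing the analogous coherence $2$-cells of $G$ from the stretching, and then to show that these $2$-cells are $\circ^2_1$-invertible (so they witness $f$ as an equivalence rather than a strict isomorphism).

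Let $g := i^1_0(f)$, a $1$-cell $b \to a$. The bracketing of the free $(\infty,0)$-categorical stretching $\pi^0_G: M^0(G) \to C^0(G)$ produces, exactly as in the previous proposition, $2$-cells
\[
\beta_b := \bigl[\lambda^0_G(f) \star^1_0 j^1_0(\lambda^0_G(f));\, 1^0_1(\lambda^0_G(b))\bigr]_1
\]
and the symmetric $\beta_a$. Applying $v$ and using its magmatic compatibility yields $v(\beta_b): f \circ^1_0 g \Rightarrow \iota^0_1(b)$ and $v(\beta_a): g \circ^1_0 f \Rightarrow \iota^0_1(a)$ in $G$. In dimension~$2$ these are no longer forced to be identities, so the previous argument stops here.

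To finish I iterate the stretching argument one dimension up, now treating $\beta_b$ as the cell to be inverted. Since $n=0$, the reversor $j^2_1(\beta_b)$ is available in $M^0(G)$, and parallelism of $\beta_b \star^2_1 j^2_1(\beta_b)$ with $1^1_2(1^0_1(\lambda^0_G(b)))$ follows from the source/target axioms of the reversors. The bracketing axiom of the stretching gives $\pi^0_G(\beta_b) = 1^1_2(1^0_1(\pi^0_G(\lambda^0_G(b))))$, and since reversors in the strict $(\infty,0)$-category $C^0(G)$ are genuine inverses, this yields
\[
\pi^0_G\bigl(\beta_b \star^2_1 j^2_1(\beta_b)\bigr) = \pi^0_G\bigl(1^1_2(1^0_1(\lambda^0_G(b)))\bigr).
\]
The bracketing then produces a $3$-cell $\gamma := \bigl[\beta_b \star^2_1 j^2_1(\beta_b);\, 1^1_2(1^0_1(\lambda^0_G(b)))\bigr]_2$ in $M^0(G)$, and $v(\gamma)$ is a $3$-cell in $G$ from $v(\beta_b) \circ^2_1 i^2_1(v(\beta_b))$ to $\iota^1_2(\iota^0_1(b))$. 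Because $(G,v)$ has dimension~$2$, $v(\gamma)$ is an identity, giving the strict equality $v(\beta_b) \circ^2_1 i^2_1(v(\beta_b)) = \iota^1_2(\iota^0_1(b))$. Symmetric treatment of the other composition and of $\beta_a$ shows that $v(\beta_b)$ and $v(\beta_a)$ are $\circ^2_1$-invertible, so $g$ together with these invertible coherence $2$-cells exhibits $f$ as an equivalence.

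The main obstacle is bookkeeping at the higher level: one must carefully verify the parallelism of $\beta_b \star^2_1 j^2_1(\beta_b)$ with its reflexive partner, and unwind $v$ applied to the iterated bracket to check that the source and target of $v(\gamma)$ are truly the expected composite and $\iota^1_2(\iota^0_1(b))$, invoking the magmatic identities of section~\ref{algebre-magma}. Once those verifications are in place, the dimension~$2$ hypothesis collapses $v(\gamma)$ to an identity and finishes the proof.
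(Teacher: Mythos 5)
Your proposal is correct and follows essentially the same route as the paper: build the coherence $2$-cells $\beta_a,\beta_b$ as in dimension $1$, then go one dimension up by composing $\beta_b$ with $j^2_1(\beta_b)$, check via $\pi^0_G$ (using the bracketing axiom, preservation of reversors, and the identity $j^2_1\circ 1^0_2=1^0_2$ in the strict groupoid) that the composite and $1^0_2(\lambda^0_G(b))=1^1_2(1^0_1(\lambda^0_G(b)))$ have equal images, extract the coherence $3$-cell from the contractibility, and let the dimension-$2$ hypothesis collapse its image under $v$ to an identity. The bookkeeping you defer is exactly the chain of equalities the paper writes out explicitly, so nothing essential is missing.
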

\begin{proof}
 Actually we are going to exhibit a diagram in $G$ of the following form 
\[
 \xygraph{
{a}="p1" [r(3)] {b}="p2"
"p1":@`{"p1"+(-1,1), "p1"+(-1,-1)}"p1"|-{}="pcod1"^(.5){\iota^{0}_{1}(a)}
"p1":@`{"p1"+(-2,2), "p1"+(-2,-2)}"p1"|-{}="pdom1"_-{i^{1}_{0}(f) \circ^{0}_{1} f}
"pdom1":@{}"pcod1"|(.35){}="d2cell1"|(.65){}="c2cell1" "d2cell1":@{=>}"c2cell1"^{v(\alpha)}
"p2":@`{"p2"+(1,1), "p2"+(1,-1)}"p2"|-{}="pcod2"_(.5){\iota^{0}_{1}(b)}
"p2":@`{"p2"+(2,2), "p2"+(2,-2)}"p2"|-{}="pdom2"^-{f \circ^{0}_{1} i^{1}_{0}(f)}
"pdom2":@{}"pcod2"|(.35){}="d2cell2"|(.65){}="c2cell2" "d2cell2":@{=>}"c2cell2"_{v(\beta)}
"p1":@/^{1pc}/"p2"^-{f} "p2":@/^{1pc}/"p1"^-{i^{1}_{0}(f)} 
} 
\]
and show that the $2$-cells $v(\alpha)$ and $v(\beta)$ are $\circ^{2}_{1}$-isomorphism.
 Let us denote by $\xymatrix{G\ar[r]^(.4){\lambda^{0}_G}&\mathbb{T}^{0}(G)}$ the universal map associated to $G$, and by 
  $\xymatrix{M^{0}(G)\ar[r]^{\pi^{0}_G}&C^{0}(G)}$ the free groupoidal stretching associated to $G$. 

Consider the following $2$-cell in $M^{0}(G)$
\[\beta= \bigl[\lambda^{0}_G(f)\star^{1}_{0}j^{1}_{0}\bigl(\lambda^{0}_G(f) \bigr)
                       \,;\, 1^{0}_{1} \bigl(\lambda^{0}_G(b) \bigr)\bigr]_{1}^{}
\]    
 We are going to show that in $M^{0}(G)$ lives a diagram of the type
{\small
\[  
 \xygraph{
    [] {\bigl[ \lambda^{0}_G(f)\star^{1}_{0}j^{1}_{0}\bigl(\lambda^{0}_G(f)\bigr)\,;\, 
     1^{0}_{1}\bigl(\lambda^{0}_G(b)\bigr)\bigr]_{1}
 \star^{2}_{1}
 j^{2}_{1}\bigl(\bigl[
   \lambda^{0}_G (f)\star^{1}_{0} j^{1}_{0}\bigl(\lambda^{0}_G(f)\bigr)
    \,;\, 1^{0}_{1}\bigl(\lambda^{0}_G(b)\bigr)\bigr]_{1}^{} \bigr)}
 :@3 [d]
   {1^{0}_{2}\bigl(\lambda^{0}_G(b)\bigr)} ^{\;\lambda^{0}_{f}}
 }
 \]}%
\noindent
 Because $\pi^{0}_G$ is a morphism of $\infty$-magmas, we have 
 \begin{align*}
        \pi^{0}_G\bigl(&\bigl[
         \lambda^{0}_G (f)\star^{1}_{0}j^{1}_{0}\bigl(\lambda^{0}_G (f)\bigr)\,;\, 
          1^{0}_{1}\bigl(\lambda^{0}_G(b)\bigr)\bigr)]_{1}
     \\ &\qquad\qquad 
     \star^{2}_{1} j^{2}_{1}\bigl(\bigl[
    \lambda^{0}_G(f)\star^{1}_{0}j^{1}_{0}\bigl(\lambda^{0}_G(f)\bigr)\,;\, 
     1^{0}_{1}\bigl(\lambda^{0}_G(b)\bigr)\bigr]_{1}^{}\bigr)\bigr)
\\
     &=
     \pi_G([\lambda^{0}_G(f)\star^{1}_{0}j^{1}_{0}(\lambda^{0}_G(f));1^{0}_{1}(\lambda^{0}_G(b))]_{1})
     \\&\qquad\qquad
      \circ^{2}_{1}
     \pi^{0}_G(j^{2}_{1}([\lambda^{0}_G(f)\star^{1}_{0}j^{1}_{0}(\lambda^{0}_G(f));1^{0}_{1}(\lambda^{0}_G(b))]_{1}))
\\
     & =
 \pi^{0}_G([\lambda^{0}_G(f)\star^{1}_{0}j^{1}_{0}(\lambda^{0}_G(f));1^{0}_{1}(\lambda^{0}_G(b))]_{1})
      \\&\qquad\qquad
    \circ^{2}_{1}
     j^{2}_{1}(\pi^{0}_G([\lambda^{0}_G(f)\star^{1}_{0}j^{1}_{0}(\lambda^{0}_G(f));1^{0}_{1}(\lambda^{0}_G(b))]_{1}))
\\
     & =
    1^{1}_{2}(\pi^{0}_G(\lambda^{0}_G (f)\star^{1}_{0}j^{1}_{0}(\lambda^{0}_G (f))))
      \\&\qquad\qquad
     \circ^{2}_{1}
       j^{2}_{1}(1^{1}_{2}(\pi^{0}_G(\lambda^{0}_G (f)\star^{1}_{0}j^{1}_{0}(\lambda^{0}_G (f)))))
 \\
     & =
  1^{1}_{2}(\pi^{0}_G(\lambda^{0}_G(f))\circ^{1}_{0}j^{1}_{0}\pi_G((\lambda^{0}_G (f))))
     \\&\qquad\qquad
     \circ^{2}_{1}
       j^{2}_{1}(1^{1}_{2}(\pi^{0}_G(\lambda^{0}_G (f))\circ^{1}_{0}j^{1}_{0}\pi^{0}_G((\lambda^{0}_G (f)))))
 \\
     & =              
    1^{1}_{2}(1^{0}_{1}(\pi^{0}_G(\lambda^{0}_G (b))))
             \circ^{2}_{1}
         j^{2}_{1}(1^{1}_{2}(1^{0}_{1}(\pi^{0}_G(\lambda^{0}_G (b)))))
 \\
     & =              
   1^{0}_{2}(\pi^{0}_G(\lambda^{0}_G (b)))
             \circ^{2}_{1}
         j^{2}_{1}(1^{0}_{2}(\pi^{0}_G(\lambda^{0}_G (b))))
 \\
     & =              
    1^{0}_{2}(\pi^{0}_G(\lambda^{0}_G (b)))
             \circ^{2}_{1}
         1^{0}_{2}(\pi^{0}_G(\lambda^{0}_G (b)))    
 \\
     & =                       
       1^{0}_{2}(\pi^{0}_G(\lambda^{0}_G (b)))
       =\pi^{0}_G(1^{0}_{2}(\lambda^{0}_G (b)))\,.       
 \end{align*}
The second equality holds because $\pi^{0}_G$ respect the $(\infty,0)$-reversible structures
and the third equality holds because of the definition of an $(\infty,0)$-stretching (see \ref{groupoidal_stretchings}),
the fourth equality is because $\pi^{0}_G$ is a morphism of $\infty$-magmas and $\pi^{0}_G$ preserve the reversible structure,
whereas the fifth equality is because $\pi^{0}_G(\lambda^{0}_G(f))$ and $j^{1}_{0}\pi^{0}_G((\lambda^{0}_G(f))$ 
are $\circ^{1}_{0}$-inverse in the strict $\infty$-groupoid $C^{0}(G)$.
The seventh equality is because $j^{2}_{1}\circ 1^{0}_{2}=1^{0}_{2}$; see \ref{involutive-reflexivity}.
Thus by the contractibility structure in $M^{0}(G)$, we get the following coherence $3$-cell in $M^{0}(G)$:

\[
 \xygraph{
  [] {[\lambda^{0}_G(f)\star^{1}_{0}j^{1}_{0}(\lambda^{0}_G(f));1^{0}_{1}(\lambda^{0}_G(b))]_{1}
 \star^{2}_{1}
 j^{2}_{1}([\lambda^{0}_G(f)\star^{1}_{0}j^{1}_{0}(\lambda^{0}_G(f));1^{0}_{1}(\lambda^{0}_G(b))]_{1})}
 : @3 [d] {1^{0}_{2}(\lambda^{0}_G(b))}  ^{\;\lambda_{f}}
 }
\]
 where $\lambda_{f}$ is the $3$-cell
\begin{align*}
   [[\lambda^{0}_G(f)&\star^{1}_{0}j^{1}_{0}(\lambda^{0}_G(f))\,;1^{0}_{1}(\lambda^{0}_G(b))]_{1}
 \star^{2}_{1}
 j^{2}_{1}([\lambda^{0}_G(f)\star^{1}_{0}j^{1}_{0}(\lambda^{0}_G(f))\,;1^{0}_{1}(\lambda^{0}_G(b))]_{1})
 \\  &  \,;1^{0}_{2}(\lambda^{0}_G(b))]_{2}
 \end{align*}
By applying to it the $\mathbb{T}^{0}$-algebra $(G;v)$ we obtain the following $3$-cell in~$G$
\[
 \xygraph{
  [] {v([\lambda^{0}_G(f)\star^{1}_{0}j^{1}_{0}(\lambda^{0}_G(f));1^{0}_{1}(\lambda^{0}_G(b))]_{1}
 \star^{2}_{1}
 j^{2}_{1}([\lambda^{0}_G(f)\star^{1}_{0}j^{1}_{0}(\lambda^{0}_G(f));1^{0}_{1}(\lambda^{0}_G(b))]_{1}))}
   : @3 [d] {v(1^{0}_{2}(\lambda^{0}_G(b)))}  ^{\;v(\lambda_{f})}
 }
 \]  
with 
 \begin{align*}
 v(&[\lambda^{0}_G(f)\star^{1}_{0}j^{1}_{0}(\lambda^{0}_G(f));1^{0}_{1}(\lambda^{0}_G(b))]_{1}
 \\ &\qquad\qquad\star^{2}_{1}
 j^{2}_{1}([\lambda^{0}_G(f)\star^{1}_{0}j^{1}_{0}(\lambda^{0}_G(f));1^{0}_{1}(\lambda^{0}_G(b))]_{1})
 \\
 &=v([\lambda^{0}_G(f)\star^{1}_{0}j^{1}_{0}(\lambda^{0}_G(f));1^{0}_{1}(\lambda^{0}_G(b))]_{1}) 
    \\ &\qquad\qquad\circ^{2}_{1}  
   v(j^{2}_{1}([\lambda^{0}_G(f)\star^{1}_{0}j^{1}_{0}(\lambda^{0}_G(f));1^{0}_{1}(\lambda^{0}_G(b))]_{1}))
\end{align*}
because $v$ is a morphism of $\infty$-magmas
\begin{align*}
   =v(& [\lambda^{0}_G(f)\star^{1}_{0}j^{1}_{0}(\lambda^{0}_G(f));1^{0}_{1}(\lambda^{0}_G(b))]_{1}) 
    \\ &\qquad\qquad\circ^{2}_{1}  
   i^{2}_{1}(v([\lambda^{0}_G(f)\star^{1}_{0}i^{1}_{0}(\lambda^{0}_G(f));1^{0}_{1}(\lambda^{0}_G(b))]_{1}))
   \\  
  &=v(\beta)
    \circ^{2}_{1}
       i^{2}_{1}(v(\beta))
\end{align*}    
 Thus we obtain the following $3$-cell in $G$ 
      \[\xymatrix{v(\beta)
    \circ^{2}_{1}
       i^{2}_{1}(v(\beta))\ar@3[r]^(.65){v(\lambda_{f})}&\iota^{0}_{2}(b)}\]
But the $\mathbb{T}^{0}$-algebra $(G;v)$ has dimension $2$ thus
 $v(\lambda_{f})$ is an identity, which shows that  
 $v(\beta)\circ^{2}_{1}i^{2}_{1}(v(\beta))=\iota^{0}_{2}(b)$. By the same method 
 we can prove that $i^{2}_{1}(v(\beta))\circ^{2}_{1}v(\beta)=\iota^{1}_{2}(f\circ^{1}_{0}i^{1}_{0}(f))$ which shows that
  $v(\beta)$ is an $\circ^{2}_{1}$-isomorphism. 
  
  Furthermore with the following $2$-cell in $M^{0}(G)$
    \[\alpha=[j^{1}_{0}(\lambda^{0}_G(f))\star^{1}_{0}\lambda^{0}_G(f);1^{0}_{1}(\lambda^{0}_G(a))]_{1}\]  
   we can built a $3$-cell in $M^{0}(G)$ of the type
  
\[
  \xygraph{
   [] {[j^{1}_{0}(\lambda^{0}_G(f))\star^{1}_{0}\lambda^{0}_G(f);1^{0}_{1}(\lambda^{0}_G(a))]_{1}]
         \star^{2}_{1}
 j^{2}_{1}([j^{1}_{0}(\lambda^{0}_G(f))\star^{1}_{0}\lambda^{0}_G(f);1^{0}_{1}(\lambda^{0}_G(a))]_{1}])}
  : @3 [d] {1^{0}_{2}(\lambda^{0}_G(a))} ^{\;\rho_{f}}
  }
\]
and with the same kind of arguments as above we can prove that in $G$ we have the following $2$-cell
  
  \[\xymatrix{i^{1}_{0}(f)\circ^{1}_{0}f\ar@2[r]^(.55){v(\alpha)}&\iota^{1}_{0}(a)}\] 
which is an $\circ^{2}_{1}$-isomorphism, that is we have 
  $v(\alpha)\circ^{2}_{1}i^{2}_{1}(v(\alpha))=\iota^{0}_{2}(a)$ and
      $i^{2}_{1}(v(\alpha))\circ^{2}_{1}v(\alpha)=\iota^{1}_{2}(i^{1}_{0}(f)\circ^{1}_{0}f)$, which finally show that $f$ is an equivalence.  
\end{proof}

\bigbreak{}
  \begin{minipage}{1.0\linewidth}
    Camell \textsc{Kachour}\\
    Department of Mathematics,
    Macquarie University\\
    North Ryde,
    NSW 2109
    Australia.\\
    Phone: 00 612 9850 8942\\
    Email:\href{mailto:camell.kachour@gmail.com}{\url{camell.kachour@gmail.com}}
  \end{minipage}


\begin{thebibliography}{000}

\bibitem{metayerara:browngolasinski} Dimitri Ara and Francois Metayer, \textit{The Brown-Golasinski model structure on strict $\infty$-groupoids revisited}, 
Homology Homotopy Appl (2011), volume 10, pages 121--142.\label{metayerara:browngolasinski}
\bibitem{bat:monglob} Michael Batanin, \textit{Monoidal Globular Categories As a Natural Environment for
		  the Theory of Weak-$n$-Categories}, Advances in Mathematics (1998), volume 136, pages 39--103 .\label{bat:monglob}
\bibitem{sheafi-beke} T. Beke, \textit{Homotopy model categories}, Math.Proc.Camb.Philos.Soc, (2000), volume 129, pages 447--475.\label{sheafi-beke}
\bibitem{boardmanvogt:homotinvarian} J.M. Boardman,  and R.M Vogt, \textit{Homotopy invariant algebraic structures on topological spaces}, Lecture Notes in Mathematics, (1973), volume 347.\label{boardmanvogt:homotinvarian}

\bibitem{francisborceux:handbook2} Francis Borceux, \textit{Handbook of Categorical Algebra}, Cambridge University Press (1994), volume 2.
\label{francisborceux:handbook2}
\bibitem{Dubuc1968} Eduardo Dubuc, \textit{Adjoint triangles}, Lecture Notes in Mathematics (Springer-Verlag 1968), pages 69--91.\label{Dubuc1968}
\bibitem{Cisinsk:Bat} Denis-Charles Cisinski,  \textit{Batanin higher groupoids and homotopy types}, Contemporary Mathematics (2007) 
 pages 171--186.\label{Cisinsk:Bat}
\bibitem{laircoppey:esquisses} Laurent Coppey and Christian Lair, \textit{Le\c{c}ons de th{\'e}orie des esquisses}, Universit{\'e} Paris VII, (1985).
\label{laircoppey:esquisses}

\bibitem{dubuc:kanextensionenriched} Eduardo Dubuc, \textit{Kan extensions in enriched category theory}, SLNM, Springer Verlag (1970), volume 145.
\label{dubuc:kanextensionenriched}


\bibitem{foltz} F. Foltz, \textit{Sur la cat{\'e}gorie des foncteurs domin{\'e}s}, Cahiers de Topologie et de G{\'e}om{\'e}trie
		  Diff{\'e}rentielle Cat{\'e}gorique, XI,2 (1969).\label{foltz}

\bibitem{grothendieck83:_pursuin_stack} Ross Street, \textit{Pursuing Stacks}, Typed manuscript (1983).\label{grothendieck83:_pursuin_stack}

\bibitem{JoyalTierney} Andre Joyal and Myles Tierney, \textit{Quasi-categories vs Segal spaces}, \url{http://arxiv.org/pdf/math/0607820v2} (November 2006).
\label{JoyalTierney}
\bibitem{kamelkachour:defalg} Kamel Kachour, \textit{D{\'e}finition alg{\'e}brique des cellules non-strictes}, Cahiers de Topologie et de G{\'e}om{\'e}trie
		  Diff{\'e}rentielle Cat{\'e}gorique, volume 1 (2008), pages 1--68.\label{kamelkachour:defalg}

\bibitem{metayer:folk} Yves Lafont, Francois Metayer, and Krzysztof Worytkiewicz , \textit{A folk model structure on omega-cat}, Advances in Mathematics (2010), volume
224, pages 1183--1231.\label{metayer:folk}

\bibitem{LurieTopos} Jacob Lurie, \textit{Higher Topos Theory}, \url{http://www.math.harvard.edu/~lurie/papers/highertopoi.pdf} (2011).\label{LurieTopos}

\bibitem{penon1999} Jacques Penon, \textit{Approche polygraphique des $\infty$-cat{\'e}gories non-strictes}, Cahiers de Topologie et de G{\'e}om{\'e}trie
		  Diff{\'e}rentielle Cat{\'e}gorique (1999), pages 31-80.\label{penon1999}

\bibitem{RezkCartesian} Charles Rezk, \textit{A CARTESIAN PRESENTATION OF WEAK \break $n$-CATEGORIES}, 
available online: \url{http://www.math.uiuc.edu/~rezk/cs-objects-revised-arxiv.pdf} (2010).
\label{RezkCartesian}

\bibitem{simpson-conjecture} Carlos Simpson, \textit{Homotopy types of strict $3$-groupoids}, 
\url{http://arxiv.org/pdf/math/9810059v1.pdf} (1998).\label{simpson-conjecture}

\bibitem{simpson-homotopy-higher} Carlos Simpson, \textit{Homotopy theory of higher categories}, 
\url{http://arxiv.org/pdf/1001.4071v1.pdf} (2010).\label{simpson-homotopy-higher}

\bibitem{Smith-Quillen} T Beke, \textit{Sheafifiable homotopy model categories}, Math. Proc. Camb. Philos. Soc (2000), volume 129 (3), pages 447--475.\label{Smith-Quillen}

\end{thebibliography}
\end{document}